
\documentclass[10pt]{amsart}%
\usepackage[top=3cm, bottom=3cm, left=3cm, right=3cm] {geometry}
\usepackage{amssymb}
\usepackage{color}
\usepackage{amsmath}
\usepackage{graphicx}
\usepackage{amsfonts}%
\setcounter{MaxMatrixCols}{30}

\newcommand{\EE}{\ensuremath{\mathbb{E}}}

\newcommand{\NN}{\ensuremath{\mathbb{N}}}

\newcommand{\PP}{\ensuremath{\mathbb{P}}}

\newcommand{\RR}{\ensuremath{\mathbb{R}}}

\newcommand{\ZZ}{\ensuremath{\mathbb{Z}}}

\renewcommand{\phi}{\varphi}


\newcommand{\eps}{\ensuremath{\epsilon}}


\newcommand{\bB}{\ensuremath{\mathcal{B}}}

\newcommand{\fF}{\ensuremath{\mathcal{F}}}

\newcommand{\ltn}{\ensuremath{\left| \! \left| \! \left|}}
\newcommand{\rtn}{\ensuremath{\right| \! \right| \! \right|}}

\newtheorem{theorem}{Theorem}

\newtheorem{definition}[theorem]{Definition}

\newtheorem{lemma}[theorem]{Lemma}

\newtheorem{remark}[theorem]{Remark}

\title[Exponential stability of stochastic evolution equations driven by small fBm]{Exponential stability of stochastic evolution equations driven by small fractional Brownian motion with Hurst parameter in $(1/2,1)$}

\author{L.H. Duc}
\address[Luu Hoang Duc]{Max-Planck-Institut f\"ur Mathematik in den Naturwissenschaften, Inselstr. 22, 04103 Leipzig,
Germany}
\email[Luu Hoang Duc]{duc.luu@mis.mpg.de}

\author{M.J. Garrido-Atienza}
\address[Mar\'{\i}a J. Garrido-Atienza]{Dpto. Ecuaciones Diferenciales y An\'alisis Num\'erico\\
Universidad de Sevilla, Apdo. de Correos 1160, 41080-Sevilla,
Spain} \email[Mar\'{\i}a J. Garrido-Atienza]{mgarrido@us.es}

\author{A. Neuenkirch}
\address[Andreas Neuenkirch] {Universit\"at Mannheim, Institut f\"ur Mathematik, A5, 6, D-68131, Mannheim, Germany}\email[Andreas Neuenkirch]{neuenkirch@kiwi.math.uni-mannheim.de}

\author{B. Schmalfu{\ss }}
\address[Bj{\"o}rn Schmalfu{\ss }]{Institut f\"{u}r Stochastik\\
Friedrich Schiller Universit{\"a}t Jena, Ernst Abbe Platz 2, D-77043\\
Jena, Germany\\
}
\email[Bj{\"o}rn Schmalfu{\ss }]{bjoern.schmalfuss@uni-jena.de}

\parindent0.0em

\subjclass[2000]{Primary: 37C15; Secondary: 34A34, 34F05.}

\keywords{}

\begin{document}

\begin{abstract}
This paper addresses the exponential stability of the trivial solution of some types of evolution equations driven by H\"older continuous functions with H\"older index greater than $1/2$. The results can be applied to the case of equations whose noisy inputs are given by a fractional Brownian motion $B^H$ with covariance operator $Q$, provided that $H\in (1/2,1)$ and ${\rm tr}(Q)$ is sufficiently small.
\end{abstract}

\maketitle

\section{Introduction}

In this article we investigate the exponential stability of SPDEs driven by a fractional Brownian motion of the following type
\begin{equation}\label{r}
  du(t)=(Au(t)+F(u(t)))dt+G(u(t))dB^H(t), \quad t \geq 0, \qquad u(0)=u_0,
\end{equation}
defined on a separable Hilbert space $V$. In this equation, $A$ is the generator of an exponentially stable semigroup $S$ on $V$,
$F,\,G$ are nonlinear operators that will be introduced in the next section, $u_0\in V$ and $B^H$ is a $V$--valued fractional Brownian motion with Hurst parameter $H\in (1/2,1)$.\\


The study of the stability of stochastic ordinary and stochastic partial differential equations providing relevant information on the longtime behavior of the solution of such equations has already given birth to a huge number of works.  Just four decades ago this issue was considered when the noisy input was given by a Brownian motion $B^{1/2}$, and first analysis for Ito equations was deeply addressed in the pioneering monograph by Khasmiskii \cite{MR2894052}, where different kinds of stability were studied, like stability in probability, moment stability and almost sure exponential stability. All these methods are based on the fact that an Ito equation with sufficiently smooth coefficients generates a Markov semigroup or a Markov evolution family. Besides, one can find in the monograph by Mao \cite{MR1275834} stability results of more general stochastic differential equations. It is impossible to cite the big amount of papers that deal with stability analysis by using Lyapunov functions/functional or the Razumikhin-Lyapunov technique. To name only a few, some recent papers are \cite{CH}, \cite{kloedenwu1}, \cite{kloedenwu2} . As a result, the stability behavior of the solutions to Ito equations is quite well understood.\\

During the last two decades stochastic differential equations driven by other noises than the Brownian motions came in the center of interests due to their applications.
One class of noises is the fractional Brownian motion $B^H$ (fBm), which is a centered Gau{\ss} process with a covariance function determined by a parameter $H\in (0,1)$, known as the Hurst parameter. For $H\not=1/2$ this stochastic process does not have some fundamental properties of the Brownian motion, especially this process is not a Markov process. In this background, it seems quite natural to wonder whether the stability analysis carried out for Ito equations can be extended to equations with the non-Markov driving process $B^H$. In the articles \cite{H1, H2,H3,H4} the existence of adapted stationary solutions to dissipative finite-dimensional SDEs driven by fBm and their speed of convergence to the stationary state is studied, by extending Markovian notions as strong Feller property, invariant measure and adaptedness to the non-Markovian setting. A different approach has been developed recently in \cite{GNSch} for the particular case of fBm with Hurst index $H > 1/2$, where the integral is understood in a pathwise way based on fractional calculus techniques, and the  {\it local} exponential stability of the trivial solution is established. The meaning of {\it local} is that the initial condition $u_0$ must belong to a neighborhood of zero. The method in that paper is based on a cut--off argument, involving suitable tempered random variables. \\

If one wishes to go one step further and consider {\it global} exponential stability then stopping times should come into play. The reason is that the estimate of the norm of the solution depends on the magnitude of the driven process, therefore we will define a sequence of stopping times $(T_i)_{i\in \ZZ}$ such that the sequence of paths $(\theta_i B^H)_{i\in \ZZ}$ has bounded H\"older norm on the interval $[T_i,T_{i+1}]$ (here $\theta$ represents the Wiener shift flow). We shall check that, by choosing a fractional Brownian motion with a covariance operator with sufficiently small trace, the corresponding sequence of stopping times does not have any cluster point and how they help in order to handle systems like (\ref{r}). This smallness condition on the noise is necessary to cover the gap produced between the Lipschitz constants of the mappings $F$ and $G$ and the exponential growth of the semigroup $S$. It is worth to mentioning that, if in our main result the noise is neglected, then the sufficient condition ensuring exponential stability of the trivial solution is the same than the well-known sufficient condition in the deterministic setting. The method then seems to be consistent and it leads to what seems to us as the first result on exponential stability results for solutions of (\ref{r}) when $H\in (1/2,1)$ by using the stopping times argument together with a suitable discrete Gronwall Lemma.\\

We would like to mention that some advances has been also obtained in the study of the asymptotic behavior of the solution to (\ref{r}) by using the theory of Random Dynamical Systems. One advantage of defining pathwise stochastic integrals is that they do not produce exceptional sets that could destroy the generation of a cocycle in the Hilbert-valued setting, as it happens with the Ito integral. Therefore, one then can consider the random dynamical system associated to equation (\ref{r}) and establish the existence of random attractors that pullback attract any solution, or the existence of other interesting objects like stable/unstable/invariant manifolds, that also give information of the longtime behavior of solutions. The reader is referred to the papers \cite{CGGSch14}, \cite{MR3226746}, and \cite{GLSch}, and the references therein.\\

The structure of the paper is the following: in Section \ref{s1} we give the different assumptions of the terms in (\ref{r}), then we introduce the pathwise integral using fractional calculus techniques and establish the existence of a unique solution to (\ref{r}). Section \ref{s2} concerns the analysis of the exponential stability of the trivial solution when the driving signal is a H\"older function, for which we construct a sequence of stopping times satisfying suitable conditions. In Section \ref{s3} we prove that all these nice properties of the stopping times hold true when taking $B^H$ as noise (for $H\in (1/2,1)$), for which its covariance should satisfied a smallness condition. The paper ends with a short section emphasizing that, if we perturb an exponentially stable deterministic system with a {\it small} fBm, then the resulting stochastic system is also exponentially stable.

We refer to \cite{DGSch17} for a short and recent announcement of our results.

\section{Analytic Preliminaries. Existence and uniqueness of mild solutions}\label{s1}

Given the separable Hilbert space ($V$, $\|\cdot\|$, $(\cdot,\cdot)_V$), our first goal is to give a definition of solution to the following $V$-valued SPDEs driven by a fractional Brownian motion
\begin{equation}\label{eq1}
  du(t)=(Au(t)+F(u(t)))dt+G(u(t))dB^H(t), \quad t \geq 0, \qquad u(0)=u_0\in V.
\end{equation}
\smallskip

Denote by $(L_2(V),\|\cdot\|_{L_2(V)})$ the separable Hilbert space of Hilbert--Schmidt operators.  If $(e_i)_{i \in \NN}$ forms a complete orthonormal basis of $V$, then the norm $\|\cdot\|_{L_2(V)}$ is defined by
$$\|z\|_{L_2(V)}^2=\sum_{i\in \NN} \|z e_i\|^2,$$
for $z\in L_2(V)$.

We will work under the following assumptions for the operator $A$ and the nonlinearities $F$ and $G$:
\begin{itemize}
 \item[{\bf(A1)}] $A$ is a strictly negative and symmetric operator with a compact inverse, that generates an analytic exponential stable semigroup $S$ on $V$.
\item[{\bf(A2)}] The mapping
$F:V\to V$ is globally Lipschitz--continuous with Lipschitz constant denoted by $c_{DF}$.
\item[{\bf(A3)}] The mapping  $G: V\to L_2(V)$ is a twice continuously
Fr\'echet--differentiable operator with bounded first and second
derivatives. We denote by $c_{DG}$ and $c_{D^2G}$, respectively, the bounds of $DG$ and $D^2G$.
\end{itemize}
Furthermore, $B^H$ is a $V$--valued fractional Brownian motion with Hurst parameter $H\in (1/2,1)$, see details in Section 4.

We shall interpret \eqref{eq1} in its mild form
\begin{equation}\label{eq2}
  u(t)=S(t)u_0+\int_0^tS(t-r)F(u(r))dr+\int_0^tS(t-r)G(u(r))dB^H(r), \quad t \geq 0,
\end{equation}
where the integral against $B^H$ is a Young integral with values in $V$, and hence a pathwise defined integral. Specifically, interpreting $B^H$ to be a canonical fractional Brownian motion $B^H(t,\omega)=\omega(t)$ for $t\in\RR$, we know that almost surely this canonical process is $\beta^\prime$--H{\"o}lder--continuous for any $\beta^\prime<H$, see Bauer \cite{Bau96}. Thus, we can reformulate equation \eqref{eq2} as
\begin{equation}\label{eq3}
  u(t)=S(t)u_0+\int_0^tS(t-r)F(u(r))dr+\int_0^tS(t-r)G(u(r))d\omega(r), \qquad t \geq 0.
\end{equation}

In this section we will briefly present the main tools to analyze the existence of solutions to equation \eqref{eq3}.
\medskip

\medskip

In order to define the integral with respect to $\omega$, we introduce the so--called fractional derivatives. More precisely, we define the right hand side fractional derivative of order $\alpha\in (0,1)$ of a sufficiently regular function $g$ and the left hand side fractional derivative of order $1-\alpha$ of of a sufficiently regular function $\omega_{t-}(\cdot):=\omega(\cdot)-\omega(t)$, given by the expressions
 \begin{align*}\label{fractder}
 \begin{split}
    D_{{s}+}^\alpha g[r]=&\frac{1}{\Gamma(1-\alpha)}\bigg(\frac{g(r)}{(r-s)^\alpha}+\alpha\int_{s}^r\frac{g(r)-g(q)}{(r-q)^{1+\alpha}}dq\bigg),\\
    D_{{t}-}^{1-\alpha} \omega_{{t}-}[r]=&\frac{(-1)^{1-\alpha}}{\Gamma(\alpha)}
    \bigg(\frac{\omega(r)-\omega(t)}{(t-r)^{1-\alpha}}+(1-\alpha)\int_r^{t}\frac{\omega(r)-\omega(q)}{(q-r)^{2-\alpha}}dq\bigg),
\end{split}
\end{align*}
where $0\le s\le r\le t$ and $\Gamma(\cdot)$ denotes the Gamma function, see \cite{Samko} for a comprehensive introduction of fractional derivatives.

In what follows, $(e_i)_{i\in\NN}$ will be the complete orthonormal base in $V$ generated by the eigenelements of $-A$ with associated eigenvalues $(\lambda_i)_{i\in\NN}$.

Assume $1-\beta^\prime <\alpha<\beta$ and $g\in C^\beta_\beta([0,T]; L_2(V))$ (see the definition of this space below), $\omega \in C^{\beta^\prime} ([0,T];V)$, such that
$$r\mapsto\|D_{s+}^\alpha g[r]\|_{L_2(V)}\|D_{t-}^{1-\alpha}\omega[r]\|$$ is Lebesgue integrable. Then for $0\le s\le r\le t \le T$ we can define
\begin{equation} \label{integral}
  \int_s^t g(r)d\omega(r):=(-1)^\alpha\sum_{j\in \NN}\bigg(\sum_{i\in \NN}\int_s^t D_{s+}^\alpha(e_j,g(\cdot)e_i)_{V}[r]D_{t-}^{1-\alpha}(e_i,\omega(\cdot))_{ V}[r]dr\bigg)e_j.
\end{equation}

This integral is well--defined and it is given by a generalization of the pathwise integral introduced by Z\"ahle \cite{Zah98}, which was given as an extension to a stochastic setting of the Young integral (see \cite {You36}). Also, for our further purposes, we will need to consider a mapping $\theta$ defined as
\begin{align}\label{shift}
\theta_t \omega (\cdot)=\omega(t+\cdot)-\omega(t),\quad t\in \RR.
\end{align}
This mapping, known as the Wiener shift, will be introduced with more details in Section \ref{s3} when considering the fractional Brownian motion as integrator. One property that we will use in the study of the stability of our problem refers to the behavior of the integral when performing a change of variable, which reads as follows
\begin{align}\label{change}
\int_s^t g(r)d\omega(r)= \int_{s-\tau}^{t-\tau} g(r+\tau)d\theta_\tau \omega(r),
\end{align}
see \cite{CGGSch14} for the proof.

\medskip

On the other hand, as a consequence of {\bf(A1)}, we can consider the fractional powers of $-A$ and introduce the spaces $V_\delta:=D((-A)^\delta)$ with norm $\|\cdot\|_{V_\delta}=\|(-A)^\delta\cdot\|$ for $\delta\ge 0$ such that $V=V_0$, see \cite{CGGSch14}.
Thanks to the analyticity of the semigroup, there exists a constant $c_S>0$ such that
\begin{align}
  \|S(t)\|_{L(V_\zeta, V_{\gamma})}&= \|(-A)^\gamma S(t)\|_{L(V_\zeta,V)}\le
  c_{S}t^{\zeta-\gamma}e^{-\lambda t}\qquad\text{for }
  \gamma \geq \zeta \geq 0
 \label{eq4},
  \end{align}
  \begin{align}
 \|S(t)-{\rm id}\|_{L(V_{\sigma},V_{\eta})} &\le c_S
t^{\sigma-\eta}, \quad \text{for }\eta\geq 0,\quad \sigma\in
[\eta,1+\eta]\label{eq5},
\end{align}
where $0<\lambda\le \lambda_1$.
The constant $c_S$ may depend on the time interval $[0,T]\ni t$, see Chueshov \cite{Chu02} Page 83. But choosing $\lambda<\lambda_1$, these estimates are true for all $t>0$ with $c_S$ depending on $\lambda_1-\lambda$ but not on $t$ (note that $c_S$ may depend on other parameters $\gamma,\,\sigma,\,\eta$, but we suppress this dependence in its notation. Also throughout the paper, the value of $c_S$ can change from line to line).

As usual, denote by $L(V_\sigma,V_\eta)$ (respectively $L(V)$) the space of continuous linear operators from $V_\sigma$ into $V_\eta$ (from $V$ into itself). From (\ref{eq4}) and (\ref{eq5}), for $0\leq q\leq r\leq s\leq t$, we can easily derive that
\begin{align}\label{eq30}
\begin{split}
& \|S(t-r)-S(t-q)\|_{L(V_{\delta},V_{\gamma})}\le c_S(r-q)^\eta (t-r)^{-\eta-\gamma+\delta},\,\, \text{for }
  \gamma \geq \delta-\eta \geq 0,
\\
 & \|S(t-r)- S(s-r)-S(t-q)+S(s-q)\|_{L(V)}\\
&\qquad \leq
c_S(t-s)^{\eta}(r-q)^{\gamma}(s-r)^{-(\eta+\gamma)},\,\, \text{for }
  \gamma, \, \eta \geq 0.
\end{split}
\end{align}

Since the properties on the semigroup does not ensure H\"older continuity at zero (see Lemma \ref{lem_damp_initial}), we will work with a damped H\"older norm:
\begin{equation*}
    \|u\|_{\beta,\beta}=\|u\|_{\beta,\beta,T_1,T_2}=\|u\|_{\infty, T_1,T_2}+\ltn u \rtn_{\beta,\beta,T_1,T_2},
\end{equation*}
where
\begin{equation*}
  \ltn u \rtn_{\beta,\beta,T_1,T_2}=\sup_{T_1< s<t\le T_2}(s-T_1)^\beta\frac{\|u(t)-u(s)\|}{(t-s)^\beta}
\end{equation*}
and denote by $C^\beta_\beta([T_1,T_2];V)$ the set of functions $u \in C([T_1,T_2];V)$ such that $\|u\|_{\beta,\beta} < \infty. $
It is known that $C^\beta_\beta([T_1,T_2];V)$  is a Banach space, see \cite{lunardi} and \cite{CGGSch14}.

The following lemma shows why we consider the norm $\|\cdot\|_{\beta,\beta}$ instead of the usual H\"older norm $\|\cdot\|_\beta$:

\begin{lemma}\label{lem_damp_initial} Let $0 \leq t_1 < t_2 < \infty$, $v \in V$, $\beta \in (0,1)$ and $\lambda <\lambda_1$.  Then we have
\begin{align*}
  \|S(\cdot)v\|_{\beta,\beta,t_1,t_2}
 &\le  c_S e^{-\lambda t_1}\|v\|.
\end{align*} 	
\end{lemma}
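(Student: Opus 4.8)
The plan is to estimate separately the two pieces of the damped Hölder norm, namely the sup-part $\|S(\cdot)v\|_{\infty,t_1,t_2}$ and the seminorm $\ltn S(\cdot)v\rtn_{\beta,\beta,t_1,t_2}$, and then add them. For the sup-part, I would simply invoke \eqref{eq4} with $\zeta=\gamma=0$: for every $t\in[t_1,t_2]$ one has $\|S(t)v\|\le c_S e^{-\lambda t}\|v\|\le c_S e^{-\lambda t_1}\|v\|$ since $t\ge t_1$ and $\lambda\ge 0$, giving the desired bound for $\|S(\cdot)v\|_{\infty,t_1,t_2}$.

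For the seminorm, fix $t_1<s<t\le t_2$ and write $S(t)v-S(s)v=(S(t-s)-\mathrm{id})S(s)v$. The idea is to pull out one factor of $(-A)^\beta$ to create the Hölder exponent and then use exponential decay on the remaining semigroup: using \eqref{eq5} with $\sigma=\beta$, $\eta=0$ we get $\|S(t-s)-\mathrm{id}\|_{L(V_\beta,V)}\le c_S(t-s)^\beta$, so that
\[
\|S(t)v-S(s)v\|\le c_S(t-s)^\beta\|(-A)^\beta S(s)v\|.
\]
Now apply \eqref{eq4} with $\zeta=0$, $\gamma=\beta$ to bound $\|(-A)^\beta S(s)v\|=\|S(s)v\|_{V_\beta}\le c_S s^{-\beta}e^{-\lambda s}\|v\|$. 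Multiplying by the damping weight $(s-t_1)^\beta$ and dividing by $(t-s)^\beta$ yields
\[
(s-t_1)^\beta\frac{\|S(t)v-S(s)v\|}{(t-s)^\beta}\le c_S\,(s-t_1)^\beta s^{-\beta}e^{-\lambda s}\|v\|\le c_S e^{-\lambda t_1}\|v\|,
\]
where in the last step I use $(s-t_1)^\beta\le s^\beta$ (since $t_1\ge 0$), so $(s-t_1)^\beta s^{-\beta}\le 1$, and $e^{-\lambda s}\le e^{-\lambda t_1}$. Taking the supremum over $s,t$ gives the bound for the seminorm, and adding the two contributions (absorbing the factor $2$ into $c_S$, which by the stated convention may change from line to line) completes the proof.

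The only subtlety — really the point of the damped norm — is the blow-up of $\|(-A)^\beta S(s)v\|$ like $s^{-\beta}$ as $s\downarrow t_1=0$, which would prevent an ordinary Hölder estimate at $t_1=0$; the weight $(s-t_1)^\beta$ is exactly designed to kill this singularity, and the computation $(s-t_1)^\beta s^{-\beta}\le 1$ is where that cancellation happens. There is no real obstacle here; the main thing to be careful about is matching the exponents correctly when applying \eqref{eq4} and \eqref{eq5}, and remembering that $c_S$ is uniform in $t$ precisely because $\lambda<\lambda_1$, as noted after \eqref{eq5}.
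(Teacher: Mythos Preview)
Your proof is correct and follows essentially the same route as the paper: both bound the sup-part via \eqref{eq4} and the seminorm by combining \eqref{eq5} with \eqref{eq4} to obtain the factor $c_S(s-t_1)^\beta s^{-\beta}e^{-\lambda s}$, then cancel via $(s-t_1)^\beta s^{-\beta}\le 1$. The only cosmetic difference is that you make the decomposition $S(t)-S(s)=(S(t-s)-\mathrm{id})S(s)$ explicit, whereas the paper writes the resulting bound in one line.
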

\begin{proof}
According to (\ref{eq4}) and (\ref{eq5}), we have that
\begin{align*}
  \|S(\cdot)v\|_{\beta,\beta, t_1,t_2}&= \sup_{t\in [t_1,t_2]}\|S(t)v\|+\sup_{t_1< s<t\le t_2}(s-t_1)^\beta\frac{\|S(t)v-S(s)v\|}{(t-s)^\beta}\\
  &\le  c_S e^{-\lambda t_1}\|v\|+c_S \sup_{t_1< s<t\le t_2}(s-t_1)^\beta\frac{ s^{-\beta} e^{-\lambda s} (t-s)^\beta }{(t-s)^\beta}\|v\|\\
 &\le  c_S  e^{-\lambda t_1}\|v\|, \end{align*}
since $s^{-\beta}\leq (s-t_1)^{-\beta}$.

However, for the usual H\"older norm $\|\cdot\|_\beta$ we obtain
\begin{align*}
  \|S(\cdot)v\|_{\beta, t_1,t_2}&\le  c_S e^{-\lambda t_1}\|v\|+c_S \sup_{t_1\le s<t\le t_2}\frac{ s^{-\beta} e^{-\lambda s} (t-s)^\beta }{(t-s)^\beta}\|v\|
 \end{align*}
 and the last term on the right hand side is infinite when considering $t_1=0$.
\end{proof}

On the other hand, from {\bf (A2)} we directly obtain, with $c_F:=\|F(0)\|$, that
\begin{equation}\label{condF}
     \|F(u)-F(v)\|\le c_{DF}\|u-v\|,\qquad  \|F(u)\|\le c_F+c_{DF}\|u\|,
\end{equation}
and thanks to {\bf(A3)}, if  $c_G:=\|G(0)\|_{L_2(V)}$, for $u_1,\,u_2,\,v_1,\,v_2\in V$, we have
\begin{align} \label{condG}
    &\|G(u_1)\|_{L_2(V)}\le c_G+ c_{DG}\|u_1\|,  \nonumber\\
    &\|G(u_1)-G(v_1)\|_{L_2(V)}\le c_{DG}\|u_1-v_1\|, \\ \nonumber
    &\|G(u_1)-G(v_1)-(G(u_2)-G(v_2))\|_{L_2(V)}\\ \nonumber
    & \quad \le c_{DG}\|u_1-v_1-(u_2-v_2)\|+c_{D^2G} \|u_1-u_2\|(\|u_1-v_1\|+\|u_2-v_2\|).
\end{align}

The following result will be crucial for our analysis, and the proof can be found in \cite{CGGSch14}.
\begin{lemma} \label{lem_cruc}
Let $T>0$, $\omega\in C^{\beta^\prime}([0,T];V)$, $1/2<\beta<\beta^\prime$, $1-\beta' < \alpha < \beta$ and  $u\in C^\beta_\beta([0,T];V)$.  Under assumptions {\bf(A1)-(A3)} and $G(0)=F(0)=0$
we have that
\begin{equation*}
   t\mapsto \int_0^tS(t-r)G(u(r))d\omega \, \in C^\beta_\beta([0,T];V)
\end{equation*}
where
\begin{align}\label{3_lemma}
\left\| \int_0^\cdot S(\cdot-r)G(u(r))d\omega \right \|_{\beta,\beta,0,T}\leq T^{\beta^\prime}c_{\alpha,\beta,\beta^\prime}c_S c_{DG}\ltn\omega\rtn_{\beta^\prime,0,T}\|u\|_{\beta,\beta,0,T}.
\end{align} Moreover, we have
\begin{equation*}
  t\mapsto \int_0^tS(t-r)F(u(r))dr \, \in C^\beta_\beta([0,T];V)
\end{equation*}
with
\begin{equation}\label{4_lemma}
\left \|\int_0^{\cdot}S(\cdot-r)F(u(r))dr \right \|_{\beta,\beta,0,T} \leq  T c_S c_{DF}\|u\|_{\infty,0,T}.
\end{equation}
Here $c_{\alpha, \beta, \beta'}>0$ denotes a constant which only depends on $\alpha$, $\beta$ and $\beta^\prime$.	
\end{lemma}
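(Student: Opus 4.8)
The plan is to reduce everything to the fractional-integral representation \eqref{integral} and then estimate the two fractional derivatives occurring in it. Writing $g(r):=S(t-r)G(u(r))\in L_2(V)$ and applying $D_{s+}^{\alpha}$ entrywise — legitimate once the defining integral is known to converge in $L_2(V)$, which is a by-product of the bounds below — a Cauchy--Schwarz in the summation index $i$ together with Fubini collapse the double sum in \eqref{integral} to the Bochner integral $(-1)^{\alpha}\int_s^{t}(D_{s+}^{\alpha}g[r])\big[D_{t-}^{1-\alpha}\omega_{t-}[r]\big]\,dr$, so that
\begin{equation*}
\Big\|\int_s^{t}g(r)\,d\omega(r)\Big\|\le\int_s^{t}\|D_{s+}^{\alpha}g[r]\|_{L_2(V)}\,\|D_{t-}^{1-\alpha}\omega_{t-}[r]\|\,dr .
\end{equation*}
The $\omega$-factor here is harmless: from the definition and $\|\omega(r)-\omega(q)\|\le\ltn\omega\rtn_{\beta',s,t}(r-q)^{\beta'}$ one obtains $\|D_{t-}^{1-\alpha}\omega_{t-}[r]\|\le c_{\alpha,\beta'}\ltn\omega\rtn_{\beta',s,t}(t-r)^{\beta'+\alpha-1}$, the exponent being positive and the interior integral convergent precisely because $\alpha>1-\beta'$.

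The heart of the argument is the bound for $\|D_{s+}^{\alpha}g[r]\|_{L_2(V)}$. I would split $D_{s+}^{\alpha}g[r]$ into its diagonal term $g(r)(r-s)^{-\alpha}$, bounded by $c_Sc_{DG}\|u\|_{\infty}(r-s)^{-\alpha}$ via $\|S(t-r)\|_{L(V)}\le c_S$ and $G(0)=0$, and its singular integral $\alpha\int_s^{r}(g(r)-g(q))(r-q)^{-1-\alpha}\,dq$, for which I would use
\begin{equation*}
g(r)-g(q)=\big(S(t-r)-S(t-q)\big)G(u(r))+S(t-q)\big(G(u(r))-G(u(q))\big).
\end{equation*}
The first summand has $L_2(V)$-norm at most $\|S(t-r)-S(t-q)\|_{L(V)}\,c_{DG}\|u\|_{\infty}$; writing $S(t-q)=S(r-q)S(t-r)$ and invoking \eqref{eq4}--\eqref{eq5} gives $\|S(t-r)-S(t-q)\|_{L(V)}\le c_S(r-q)^{\eta}(t-r)^{-\eta}$ for any $\eta\in(\alpha,1)$. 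The second summand is at most $c_Sc_{DG}\|u(r)-u(q)\|\le c_Sc_{DG}\,q^{-\beta}(r-q)^{\beta}\ltn u\rtn_{\beta,\beta}$. Both feed into convergent $q$-integrals — $(r-q)^{\eta-1-\alpha}$ is integrable since $\eta>\alpha$, and $q^{-\beta}(r-q)^{\beta-1-\alpha}$ since $\alpha<\beta<1$ — so, after recognising Beta integrals,
\begin{equation*}
\|D_{s+}^{\alpha}g[r]\|_{L_2(V)}\le c_{\alpha,\beta}c_Sc_{DG}\Big(\|u\|_{\infty}(r-s)^{-\alpha}+\|u\|_{\infty}(t-r)^{-\eta}(r-s)^{\eta-\alpha}+\ltn u\rtn_{\beta,\beta}\,r^{-\alpha}\Big).
\end{equation*}

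For the supremum part of \eqref{3_lemma} I would set $s=0$, multiply this bound by $c_{\alpha,\beta'}\ltn\omega\rtn_{\beta',0,T}(t-r)^{\beta'+\alpha-1}$ and integrate over $r\in(0,t)$: each term is a Beta integral from which the power $t^{\beta'}\le T^{\beta'}$ factors out (one uses $\eta<\beta'+\alpha$, true as $\eta<1<\beta'+\alpha$), giving $\|z(t)\|\le c_{\alpha,\beta,\beta'}c_Sc_{DG}T^{\beta'}\ltn\omega\rtn_{\beta',0,T}\|u\|_{\beta,\beta,0,T}$ with $z(t):=\int_0^{t}S(t-r)G(u(r))\,d\omega$. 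For the damped Hölder seminorm the role of the damping becomes visible: for $0<t_1<t_2\le T$ I would write
\begin{equation*}
z(t_2)-z(t_1)=\int_{t_1}^{t_2}S(t_2-r)G(u(r))\,d\omega(r)+\int_0^{t_1}\big(S(t_2-r)-S(t_1-r)\big)G(u(r))\,d\omega(r),
\end{equation*}
estimate the first integral exactly as above but on $[t_1,t_2]$ (producing the factor $(t_2-t_1)^{\beta'}$, resp.\ $t_1^{-\beta}(t_2-t_1)^{\beta'+\beta}$ in the term carrying $\ltn u\rtn_{\beta,\beta}$), and treat the second by applying the same scheme to $h_2(r):=\big(S(t_2-t_1)-{\rm id}\big)S(t_1-r)G(u(r))$, where \eqref{eq5} extracts a clean factor $(t_2-t_1)^{\beta}$ against smoothing factors $(t_1-r)^{-\beta}$, resp.\ $(t_1-r)^{-\beta-\mu}(r-q)^{\mu}$ for a $\mu>\alpha$ small enough that $\beta'+\alpha-\beta-\mu>0$ (possible since $\beta<\beta'$). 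In each case the surviving power of $t_1$ is $\le t_1^{\beta'-\beta}$, which the seminorm weight $t_1^{\beta}$ turns into $t_1^{\beta'}\le T^{\beta'}$, while $(t_2-t_1)^{\beta'-\beta}\le T^{\beta'-\beta}$ because $\beta'>\beta$; summing gives \eqref{3_lemma}, and incidentally the continuity of $z$ on $[0,T]$ since $\|z(t)\|\le ct^{\beta'}\to0$. The estimate \eqref{4_lemma} is the easy case: $y(t):=\int_0^{t}S(t-r)F(u(r))\,dr$ is a Bochner integral with $\|y(t)\|\le c_Sc_{DF}\int_0^{t}\|u(r)\|\,dr\le Tc_Sc_{DF}\|u\|_{\infty,0,T}$, and the seminorm is dealt with by the same $t_1/t_2$ splitting and $\|S(t_2-r)-S(t_1-r)\|_{L(V)}\le c_S(t_2-t_1)^{\beta}(t_1-r)^{-\beta}$.

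The step I expect to be the real work is the bound on $\|D_{s+}^{\alpha}g[r]\|_{L_2(V)}$ and its analogue for $h_2$: the exponents $\alpha,\beta,\eta,\mu$ have to be balanced so that the diagonal singularity ($q\to r$), the initial singularity ($q\to0$ when $s=0$), and the endpoint singularity ($r\to t$, resp.\ $r\to t_1$) all give integrable powers, and the existence of an admissible choice is exactly what the standing constraints $1-\beta'<\alpha<\beta<\beta'$ — hence $\beta'>1/2$ — guarantee; the rest is Beta-function bookkeeping with all constants absorbed into $c_S$ and $c_{\alpha,\beta,\beta'}$.
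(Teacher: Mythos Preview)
The paper does not actually prove this lemma: immediately before the statement it says ``the proof can be found in \cite{CGGSch14}'' and gives no argument of its own. Your proposal supplies precisely the standard fractional-calculus proof that the cited reference carries out --- bounding $D_{t-}^{1-\alpha}\omega_{t-}$ by $c\,\ltn\omega\rtn_{\beta'}(t-r)^{\beta'+\alpha-1}$, splitting $g(r)-g(q)$ into a semigroup-increment part and a $G$-increment part, handling the damped H\"older seminorm via the two-piece decomposition of $z(t_2)-z(t_1)$ with the auxiliary exponent $\mu\in(\alpha,\beta'+\alpha-\beta)$ --- and the exponent bookkeeping you indicate (in particular $\alpha+\beta'>1$ and $\beta<\beta'$ to make all Beta integrals finite) is correct. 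So your argument is sound and coincides with the approach of the paper's source; there is nothing to compare beyond noting that the present paper outsources the proof entirely.
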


We would like to point out that in the previous lemma we have assumed that $G(0)=F(0)=0$, since we will make this simplification later when analyzing the exponential stability of the solutions. Needless to say that the regularity of the stochastic and non-stochastic integrals is the same when these two assumptions are omitted.\\

On account of (\ref{eq4})-(\ref{condG}), by means of the application of the Banach fixed point theorem we obtain the following result:
\begin{theorem}\label{t2}
Let $T>0$, $\omega\in C^{\beta^\prime}([0,T];V)$, $1/2<\beta<\beta^\prime$ and $u_0\in V$.  Under assumptions {\bf(A1)-(A3)}, there exists a unique solution $u\in C^\beta_\beta([0,T];V)$ of  \eqref{eq3}.
\end{theorem}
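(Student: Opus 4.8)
The plan is to realise the mild solution as a fixed point of the integral operator: for $\tau\in(0,T]$ define $\mathcal{T}\colon C^\beta_\beta([0,\tau];V)\to C^\beta_\beta([0,\tau];V)$ by
\[
\mathcal{T}(u)(t)=S(t)u_0+\int_0^tS(t-r)F(u(r))\,dr+\int_0^tS(t-r)G(u(r))\,d\omega(r),\qquad t\in[0,\tau].
\]
Since Lemma \ref{lem_cruc} is stated under $F(0)=G(0)=0$, I would first split $F=\widetilde F+F(0)$ and $G=\widetilde G+G(0)$, where $\widetilde F:=F-F(0)$ and $\widetilde G:=G-G(0)$ vanish at the origin and retain the bounds $c_{DF}$, $c_{DG}$, $c_{D^2G}$. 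The constant contributions $\int_0^tS(t-r)F(0)\,dr=(-A)^{-1}(\mathrm{id}-S(t))F(0)$ and $\int_0^tS(t-r)G(0)\,d\omega(r)$ lie in $C^\beta_\beta([0,\tau];V)$ — the first by analyticity of $S$, the second because it is a Young integral of a fixed, smooth-in-$r$, $L_2(V)$-valued integrand against $\omega\in C^{\beta'}$ with $\beta'>\beta$ — which is exactly the content of the remark following Lemma \ref{lem_cruc}. Hence $\mathcal{T}$ is well defined.

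Next I would prove local existence and uniqueness by the Banach fixed point theorem on a short interval. Lemma \ref{lem_damp_initial} gives $\|S(\cdot)u_0\|_{\beta,\beta,0,\tau}\le c_S\|u_0\|$, and combining this with Lemma \ref{lem_cruc} applied to $\widetilde F,\widetilde G$ and the bounds (\ref{condF})--(\ref{condG}) yields, on the closed ball $B_R$ of radius $R$ about $0$ in $C^\beta_\beta([0,\tau];V)$,
\[
\|\mathcal{T}(u)\|_{\beta,\beta,0,\tau}\le c_S\|u_0\|+C\big(\tau+\tau^{\beta'}\ltn\omega\rtn_{\beta',0,\tau}\big)(1+R),
\]
\[
\|\mathcal{T}(u_1)-\mathcal{T}(u_2)\|_{\beta,\beta,0,\tau}\le C(1+R)\big(\tau+\tau^{\beta'}\ltn\omega\rtn_{\beta',0,\tau}\big)\,\|u_1-u_2\|_{\beta,\beta,0,\tau},
\]
for $u,u_1,u_2\in B_R$, where $C$ depends only on $c_S,c_{DF},c_{DG},c_{D^2G},c_F,c_G,\alpha,\beta,\beta'$; the second estimate uses the \emph{increment} form of Lemma \ref{lem_cruc}, extracted from its proof once the $C^\beta_\beta$-norm of $r\mapsto G(u_1(r))-G(u_2(r))$ is controlled via the third inequality in (\ref{condG}) (this is where $1+R$ enters, through $c_{D^2G}$). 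Fixing $R:=1+2c_S\|u_0\|$ and then $\tau$ small enough that $C(1+R)(\tau+\tau^{\beta'}\ltn\omega\rtn_{\beta',0,T})\le\tfrac12$, the map $\mathcal{T}$ is a contraction of $B_R$ into itself, so Banach's theorem gives a unique $u\in C^\beta_\beta([0,\tau];V)$ solving \eqref{eq3} on $[0,\tau]$.

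To reach all of $[0,T]$ I would iterate, restarting at $\tau$ from the value $u(\tau)$. The point that makes the concatenation work is the smoothing at positive times: $S(\tau)u_0\in\bigcap_{\delta\ge0}V_\delta$ by analyticity, $\int_0^\tau S(\tau-r)F(u(r))\,dr\in V_\delta$ for every $\delta<1$ by (\ref{eq4}), and $\int_0^\tau S(\tau-r)G(u(r))\,d\omega(r)\in V_\delta$ for $\delta<\beta'$ by the $V_\delta$-valued version of Lemma \ref{lem_cruc} from \cite{CGGSch14}; choosing $\delta\in[\beta,\beta')$ gives $u(\tau)\in V_\delta$. Then on the next step the term $S(\cdot-\tau)u(\tau)$ is \emph{genuinely} $\beta$-H\"older on $[\tau,\tau']$ — by (\ref{eq5}), $\|S(h)-\mathrm{id}\|_{L(V_\delta,V)}\le c_Sh^\delta$ — and the same fixed point argument, now in $C^\beta([\tau,\tau'];V)$, produces a unique continuation. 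An a priori bound obtained from the linear growth in (\ref{condF})--(\ref{condG}) and a Gronwall argument controls $\|u\|_{\beta,\beta}$, hence $\|u(k\tau)\|$, uniformly on $[0,T]$, so the step lengths stay bounded below and finitely many cover $[0,T]$. Gluing the initial damped-H\"older piece on $[0,\tau]$ to the subsequent genuinely H\"older pieces produces a function in $C([0,T];V)$ that is damped $\beta$-H\"older near $0$ and $\beta$-H\"older on $[\tau,T]$, i.e.\ an element of $C^\beta_\beta([0,T];V)$; the semigroup property together with additivity of the integrals shows it solves \eqref{eq3}, and uniqueness propagates interval by interval.

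\textbf{The main obstacle} I expect is the global step rather than the local one. One must (i) extract the increment estimate for the stochastic convolution from the proof of Lemma \ref{lem_cruc}, tracking the $c_{D^2G}$-term so that the $R$-dependent contraction constant still permits only finitely many restarts via the a priori bound; (ii) establish the $V_\delta$-regularisation of $\int_0^\tau S(\tau-r)G(u(r))\,d\omega(r)$ with $\delta\ge\beta$; and (iii) check that the concatenated solution, damped at $0$ but merely H\"older at the restart times, indeed assembles into $C^\beta_\beta([0,T];V)$ without developing spurious singularities at the joints.
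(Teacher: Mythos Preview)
Your approach is correct but takes a different route from the paper's. The paper does not actually prove Theorem~\ref{t2}; it cites \cite{CGGSch14}, where existence and uniqueness are obtained by passing to an \emph{equivalent weighted norm} on $C^\beta_\beta([0,T];V)$, chosen so that the integral operator becomes a contraction on the whole interval $[0,T]$ in one stroke. That device eliminates precisely the three obstacles you flag: no restarts, no $V_\delta$-regularisation, no concatenation check. Your local-fixed-point-plus-iteration scheme is the classical alternative and is workable, but two points deserve emphasis. First, the $V_\delta$-smoothing in your step (ii) is genuinely required in your scheme: iterating in $C^\beta_\beta$ on successive subintervals would \emph{not} glue into $C^\beta_\beta([0,T];V)$, since the damping singularity $(s-\tau)^\beta$ at a restart time $\tau>0$ does not match the global damping $(s-0)^\beta$. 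Second, the a priori bound that keeps the step lengths bounded below must come from the \emph{linear} (hence $R$-independent) mapping estimates (\ref{3_lemma})--(\ref{4_lemma}), not from the $R$-dependent contraction estimate; otherwise the feedback through $R$ could make the steps shrink. The weighted-norm route is considerably shorter; yours is more elementary and makes the smoothing mechanism explicit, which is of independent interest.
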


This result has been proved in \cite{CGGSch14}, with the help of an equivalent norm to $\|\cdot\|_{\beta,\beta}$ that depends on a weight. However, in that paper the authors do not consider any drift $F$ in the problem, although the drift is the harmless term and suitable estimates for establishing the existence could be derived thanks to its Lipschitz continuity (see  Lemma \ref{lem_cruc}).

\medskip

We finish this section introducing a discrete Gronwall-like lemma that we will use in the next section, and whose proof can be found in \cite{GNSch}.

\begin{lemma}\label{l9}
Let $(y_n)_{n \geq 0}$ and $(g_n)_{n \geq 0}$ be nonnegative sequences and $c>0$ a nonnegative constant. If\footnotemark
\footnotetext{The sum $\sum _{j=0}^{n-1}$ is assumed to be zero for $n=0$, while the product $\prod_{j=0}^{n-1}$ is assumed to be one for $n=0$.}
$$y_n\leq c+  \sum_{j=0}^{n-1}  g_j y_j, \qquad n=0,1,\ldots, $$
then
$$y_n \leq c \prod_{j=0}^{n-1}  (1+g_j) \qquad n=0,1, \dots .$$
\end{lemma}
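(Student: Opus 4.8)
The plan is to reduce the hypothesis to a one-step recursion and iterate; this is cleaner than inducting directly on the stated bound. Introduce the auxiliary sequence
\[
z_n := c + \sum_{j=0}^{n-1} g_j y_j, \qquad n = 0, 1, \dots,
\]
so that, by the convention in the footnote, $z_0 = c$, and the hypothesis reads precisely $y_n \leq z_n$ for every $n$. Since $c \geq 0$ and all the $g_j,\,y_j$ are nonnegative, each $z_n$ is nonnegative and $(z_n)_{n\ge 0}$ is nondecreasing.

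First I would record the one-step inequality: for every $n \geq 0$,
\[
z_{n+1} = z_n + g_n y_n \leq z_n + g_n z_n = (1+g_n)\, z_n,
\]
using $y_n \leq z_n$ together with $g_n \geq 0$ in the middle step. Iterating this starting from $z_0 = c$ (a trivial induction on $n$) yields
\[
z_n \leq c \prod_{j=0}^{n-1} (1+g_j), \qquad n = 0, 1, \dots,
\]
with the empty product understood to be one for $n=0$. Combining with $y_n \leq z_n$ gives the claimed bound.

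An alternative is to induct directly on the target inequality: the base case $n=0$ is $y_0 \leq c$, and the inductive step uses the elementary identity $\prod_{j=0}^{n-1}(1+g_j) = 1 + \sum_{j=0}^{n-1} g_j \prod_{i=0}^{j-1}(1+g_i)$ (itself a one-line induction) to rewrite $c + \sum_{j=0}^{n-1} g_j \big( c\prod_{i=0}^{j-1}(1+g_i)\big)$ as $c\prod_{j=0}^{n-1}(1+g_j)$. I would favor the auxiliary-sequence version, since it sidesteps this identity entirely.

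There is essentially no hard part: the only points requiring care are the empty-sum / empty-product conventions at $n=0$ (already fixed by the footnote) and the observation that the hypothesis asserts nothing about monotonicity of $(y_n)$ itself — which is exactly why one first passes to the monotone majorant $(z_n)$ before iterating. Nonnegativity of the $g_j$ is used crucially, and only, in the step $g_n y_n \leq g_n z_n$.
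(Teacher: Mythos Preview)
Your proof is correct; the auxiliary-sequence argument is the standard and cleanest route to this discrete Gronwall inequality, and the remarks about the empty conventions and the role of nonnegativity are accurate. Note that the paper does not actually prove Lemma~\ref{l9} but refers to \cite{GNSch} for the argument, so there is no in-paper proof to compare against; your write-up would serve perfectly well as a self-contained replacement.
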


\section{Exponential Stability of the trivial Solution}\label{s2}

The aim of this section is to prove that the trivial solution of (\ref{eq3}) is exponentially stable.

Let us denote by $u=u(t)=u_{u_0}(t,\omega)$ the dynamical system generated by the unique global solution of \eqref{eq3}, see Chen {\it et al.} \cite{CGGSch14}. For the general definition
of a random dynamical system we refer to Arnold \cite{Arn98}.\\

\begin{definition}\label{d1}
We say that the the dynamical system $u$ generated by the unique global solution of \eqref{eq3} is
exponentially  stable with respect to the steady state zero and with exponential rate $\rho>0$ (or equivalently, that the trivial solution of (\ref{eq3}) is exponential stable with rate $\rho>0$), if for any bounded set of initial conditions $u(0)\in V$ there exists a random variable $C(\omega)\ge 0$ such that
$$ \|u(t)\|\le C(\omega)e^{-\rho t}$$
for $t\ge0$ and $\omega\in\Omega$.
\end{definition}

From now on, we assume that (\ref{eq3}) possesses the trivial solution, which means that we assume that $F(0)=0$ and $G(0)=0$.
\smallskip

The technique to derive the exponential stability will be based on the construction of an increasing sequence of  (stopping) times $(T_i)_{i \in \mathbb{Z}}$ in such a way that in any interval $[T_i,T_{i+1}]$ the norm of the noise will be small enough.

Given $t\in [T_n,T_{n+1}]$, thanks to the additivity of the integrals and (\ref{change}), we can consider the following splitting of the solution:
\begin{align}
\begin{split}\label{sp}
  u(t) = & S(t)u_0+\sum_{i=0}^{n-1}\int_{T_i}^{T_{i+1}}S(t-r)F(u(r))dr+\sum_{i=0}^{n-1}\int_{T_i}^{T_{i+1}}S(t-r)G(u(r))d\omega(r)\\
  &+\int_{T_n}^{t}S(t-r)F(u(r))dr+\int_{T_n}^tS(t-r)G(u(r))d\omega(r) \\
  =& S(t)u_0\\
  &+\sum_{i=0}^{n-1}S(t-T_{i+1})\int_{0}^{T_{i+1}-T_i}S(T_{i+1}-T_i-r)F(u(r+T_i))dr\\
  &+\sum_{i=0}^{n-1}S(t-T_{i+1})\int_{0}^{T_{i+1}-T_i}S(T_{i+1}-T_i-r)G(u(r+T_i))d\theta_{T_i}\omega(r)\\
  &+\int_0^{t-T_n}S(t-T_n-r)F(u(r+T_n))dr\\
  &+\int_0^{t-T_n}S(t-T_n-r)G(u(r+T_n))d\theta_{T_n}\omega(r).
  \end{split}
\end{align}
We want to estimate $\|u(t)\|_{\beta,\beta,T_n,T_{n+1}}$. To simplify the presentation, let us abbreviate $u(\cdot+T_i)$ by $u^i(\cdot)$, that is,
$$u(\tau+T_i)=u^i(\tau), \qquad  \tau \in [0,T_{i+1}-T_i].$$
In particular we have
$$u(t)=u^i (t-T_i),\qquad t-T_i\in [0, T_{i+1}-T_i],$$ and
therefore, $$ \|u\|_{\beta,\beta,T_i,T_{i+1}}= \|u^i \|_{\beta,\beta,0,T_{i+1}-T_i}.$$
In the following, we will use the abbreviation $$ \|u^i\|_{\beta,\beta}= \|u^i \|_{\beta,\beta,0,T_{i+1}-T_i}, \qquad i=0,1,2, \ldots $$

We need the following assumptions:
\begin{itemize}
\item[{\bf(S1)}]
For every $\mu \in (0,1/{c_S c_{DF}})$  there exists an increasing sequence $(T_{i})_{i \in \mathbb{Z}}$ such that $T_0=0$ and
$$ 	c_{\alpha,\beta,\beta^\prime}  c_{DG} \ltn\theta_{T_i}\omega\rtn_{\beta^\prime,0,T_{i+1}-T_i}(T_{i+1}-T_i)^{\beta^\prime} +  c_{DF} (T_{i+1}-T_i) = c_{DF} \mu, \qquad i \in \mathbb{Z}.$$
\item[{\bf(S2)}] For every $\mu \in (0,1/{c_S c_{DF}})$, there exists a $D=D(\mu,\omega) \in (0,\mu]$  such that
$$ \liminf_{k \rightarrow \infty} \frac{T_{k}}{k} \ge D. $$
\item[{\bf(S3)}] There exist $\mu \in (0,1/{c_S c_{DF}})$ such that
 \begin{align} \label{rho} \rho^*:=\lambda- \frac{c_S c_{DF} \mu}{(1- c_S c_{DF} \mu)D}e^{{\lambda \mu}} >0.
 \end{align}
\end{itemize}

\begin{remark}
As we will see in the main result of that paper (Theorem \ref{es} below) the previous assumptions are needed to establish the exponential stability of the solution of \eqref{eq3} with order $\rho<\rho^*$, with $\rho^*$ defined by (\ref{rho}). In fact, {\bf(S1)} is assumed to deal with the contribution of the Young integral with respect to $\omega$, while that {\bf(S2)} means a lower bound on the growth of the times $T_k$, where $D$ depends on $c_{DF},\,c_{DG},\,\mu$ and on the asymptotic behavior of $\omega$. In the following we will suppress the dependence on $c_{DF},\,c_{DG}$ in this notation. Later $\omega$ will be a canonical fractional Brownian motion which is ergodic. Then the asymptotical behavior will be given in terms of the trace of the covariance operator of $\omega$, denoted by ${\rm tr}(Q)$.\\

We also would like to stress that {\bf(S1)} ensures that $T_{i+1}- T_i\in (0,\mu]$, for any $i\in \ZZ$, and in particular $T_1\in (0,\mu]$.
There are other possibilities to define the stopping times $T_i$.
This definition  allows us to make a comparison between the stochastic system and its corresponding deterministic system if $F\not\equiv0$, see Section \ref{s4} below.
\end{remark}

Assuming these assumptions, we are in conditions to prove the main result of this paper.
 \begin{theorem}\label{es}
Let $T>0$, $\omega\in C^{\beta^\prime}([0,T];V)$ with $\beta' >1/2$ and let $0<\lambda < \lambda_1$, where $\lambda_1$ denotes the smallest eigenvalue of $-A$.
Moreover, assume {\bf(A1)}-{\bf (A3)},{\bf(S1)}-{\bf(S3)} and $F(0)=G(0)=0$. Then the solution of \eqref{eq3} satisfies\begin{equation} \label{est}
  \lim_{t\to\infty}\|u(t)\|e^{ \rho t}=0
\end{equation}
for all $0<\rho<\rho^*$, where $\rho^*$ is given by (\ref{rho}) and the above convergence is uniform with respect to bounded sets of initial conditions.
\end{theorem}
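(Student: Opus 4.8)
The plan is to estimate $\|u^n\|_{\beta,\beta}$ on each interval $[T_n,T_{n+1}]$ in terms of the norms $\|u^i\|_{\beta,\beta}$ on the previous intervals, and then feed the resulting inequality into the discrete Gronwall Lemma~\ref{l9}. Starting from the splitting (\ref{sp}), I would apply Lemma~\ref{lem_damp_initial} to the term $S(\cdot)u_0$ (giving a bound $c_S e^{-\lambda T_n}\|u_0\|$, which in turn is controlled by $c_S e^{-\lambda T_n} \cdot (\text{bound on } u_0)$), and apply Lemma~\ref{lem_cruc} (in the shifted/translated form, justified by (\ref{change})) together with the exponential decay (\ref{eq4}) of $S(t-T_{i+1})$ to each of the summands. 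The key point is that, by assumption {\bf(S1)}, the constant multiplying $\|u^i\|_{\beta,\beta}$ coming from the combination of the Young-integral estimate (\ref{3_lemma}) and the drift estimate (\ref{4_lemma}) on the interval $[T_i,T_{i+1}]$ is exactly $c_S c_{DF}\mu$ (or bounded by it after inserting the prefactor $c_S$), while the tail $S(t-T_{i+1})$ contributes a factor $c_S e^{-\lambda(t-T_{i+1})}$. So I expect an inequality of the shape
\begin{equation*}
\|u^n\|_{\beta,\beta} \le c_S e^{-\lambda T_n}\|u_0\|_{\text{bd}} + c_S c_{DF}\mu \|u^n\|_{\beta,\beta} + c_S c_{DF}\mu \sum_{i=0}^{n-1} c_S e^{-\lambda(T_n - T_{i+1})}\|u^i\|_{\beta,\beta}.
\end{equation*}
Since $c_S c_{DF}\mu < 1$, the self-referential middle term can be absorbed into the left-hand side, leaving
\begin{equation*}
\|u^n\|_{\beta,\beta} \le \frac{c_S}{1-c_S c_{DF}\mu}e^{-\lambda T_n}\|u_0\|_{\text{bd}} + \frac{c_S c_{DF}\mu}{1-c_S c_{DF}\mu}\sum_{i=0}^{n-1} c_S e^{-\lambda(T_n-T_{i+1})}\|u^i\|_{\beta,\beta}.
\end{equation*}

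Next I would renormalize: set $y_i := e^{\lambda T_i}\|u^i\|_{\beta,\beta}$ (or perhaps $e^{\rho T_i}\|u^i\|_{\beta,\beta}$ with a slightly smaller rate). Multiplying the previous inequality by $e^{\lambda T_n}$, using $T_{i+1}-T_i \le \mu$ from {\bf(S1)} so that $e^{\lambda T_{i+1}} \le e^{\lambda\mu} e^{\lambda T_i}$, turns it into
\begin{equation*}
y_n \le c + \sum_{i=0}^{n-1} g_i\, y_i, \qquad g_i := \frac{c_S c_{DF}\mu}{1-c_S c_{DF}\mu}\, c_S\, e^{\lambda\mu},
\end{equation*}
with $c$ a fixed constant (uniform over the bounded set of initial data). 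Lemma~\ref{l9} then yields $y_n \le c\prod_{i=0}^{n-1}(1+g_i) = c(1+g)^n$ where $g$ is the constant above, hence
\begin{equation*}
\|u^n\|_{\beta,\beta} \le c\,(1+g)^n e^{-\lambda T_n}.
\end{equation*}
Now invoke {\bf(S2)}: $T_n \ge (D-\varepsilon) n$ eventually, so $e^{-\lambda T_n} \le e^{-\lambda(D-\varepsilon)n}$, while $(1+g)^n = e^{n\log(1+g)} \le e^{ng}$. The exponent in $n$ is then $-\lambda(D-\varepsilon) + g$, and one checks using the definition (\ref{rho}) of $\rho^*$ that this is strictly negative (indeed comparing $g = \frac{c_S c_{DF}\mu}{1-c_S c_{DF}\mu}c_S e^{\lambda\mu}$ against $\lambda D$ is exactly the content of $\rho^* > 0$, up to the bookkeeping of the extra $c_S$ — I would track whether {\bf(S1)} should really read with $c_S$ inside, and adjust). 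Thus $\|u^n\|_{\beta,\beta} \to 0$ geometrically in $n$, and more precisely $\|u^n\|_{\beta,\beta} \le C e^{-\rho' T_n}$ for a suitable $\rho' < \rho^*$, after re-expressing the geometric decay in $n$ as exponential decay in $T_n$ using $T_n \le \mu n$ and $T_n \ge (D-\varepsilon)n$.

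Finally, to pass from the damped-Hölder norm on each interval to the pointwise statement (\ref{est}): for $t \in [T_n, T_{n+1}]$ we have $\|u(t)\| = \|u^n(t-T_n)\| \le \|u^n\|_{\infty, 0, T_{n+1}-T_n} \le \|u^n\|_{\beta,\beta}$, so $\|u(t)\|e^{\rho t} \le \|u^n\|_{\beta,\beta} e^{\rho T_{n+1}} \le C e^{-\rho' T_n} e^{\rho T_{n+1}}$, and since $T_{n+1} - T_n \le \mu$ this is $\le C e^{\rho\mu} e^{-(\rho'-\rho)T_n} \to 0$ as $n\to\infty$ (equivalently $t\to\infty$), for any $\rho < \rho' < \rho^*$. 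Uniformity over bounded sets of initial conditions is automatic because $u_0$ entered only through the fixed constant $c$. The main obstacle I anticipate is the careful bookkeeping in the first step: getting the self-referential coefficient and the summed coefficients to both be governed by the single quantity appearing in {\bf(S1)} requires combining (\ref{3_lemma}), (\ref{4_lemma}) and (\ref{eq4}) on the shifted intervals cleanly, and in particular making sure the $\ltn\cdot\rtn_{\beta,\beta}$-part versus the $\|\cdot\|_\infty$-part of the damped norm are both absorbed (the drift estimate (\ref{4_lemma}) is stated with $\|u\|_\infty$ on the right, so one must check $\|u\|_{\infty,0,T} \le \|u\|_{\beta,\beta,0,T}$, which is immediate). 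A secondary subtlety is handling the initial term $S(\cdot - T_i)$ applied to the $i$-th integral: one needs the composed semigroup estimate to produce the clean factor $c_S e^{-\lambda(T_n - T_{i+1})}$ without spoiling the damped-norm structure at the left endpoint $T_n$, which is exactly what Lemma~\ref{lem_damp_initial} is designed to give.
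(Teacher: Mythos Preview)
Your approach is essentially identical to the paper's: split the mild solution along the stopping times as in (\ref{sp}), apply Lemmas \ref{lem_damp_initial} and \ref{lem_cruc} together with {\bf(S1)} to obtain a recursive bound on $\|u^n\|_{\beta,\beta}$, absorb the self-referential term (using $c_S c_{DF}\mu<1$), multiply through by $e^{\lambda T_n}$, and feed the result into the discrete Gronwall Lemma~\ref{l9}. Your worry about the extra $c_S$ is legitimate but harmless under the paper's convention that $c_S$ is a generic semigroup constant allowed to change from line to line.

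There is, however, one small but genuine loss in your final conversion that prevents you from reaching the full range $\rho<\rho^*$. After Gronwall you have $\|u^n\|_{\beta,\beta}\le c\,e^{-\lambda T_n}(1+g)^n$. You then bound \emph{both} factors in terms of $n$ via $T_n\ge (D-\varepsilon)n$, obtaining the exponent $-\lambda(D-\varepsilon)+g$ in $n$, and only afterwards convert back to $T_n$ using $T_n\le \mu n$. This double conversion costs a factor $D/\mu\le 1$ in the rate: you end up with $\rho'=(\lambda(D-\varepsilon)-g)/\mu\approx (D/\mu)\rho^*$, so your argument as written only yields decay for $\rho<(D/\mu)\rho^*$, not for all $\rho<\rho^*$. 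The paper avoids this by leaving $e^{-\lambda T_n}$ untouched and bounding only the growing factor: $(1+g)^n\le e^{gn}\le e^{gT_n/(D-\varepsilon)}$, which combines directly with $e^{-\lambda T_n}$ to give the exponent $\big(-\lambda+g/(D-\varepsilon)\big)T_n=-\rho\,T_n$ with $\rho$ arbitrarily close to $\rho^*$ as $\varepsilon\to 0$.
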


\begin{proof}
By Lemma \ref{lem_damp_initial} and Lemma \ref{lem_cruc}, on account of the assumption {\bf (S1)}, from (\ref{sp}) and for $n=0,\,1,\ldots$ we obtain
\begin{align}\label{eq16}
\begin{split}
  \|u^n&\|_{\beta,\beta}\le c_S e^{-\lambda T_n }\|u_0\|\\
  &+c_S c_{DF} \sum_{i=0}^{n-1}e^{-\lambda(T_n-T_{i+1})} (T_{i+1}-T_i) \|u^i\|_{\beta,\beta} + c_S c_{DF} (T_{n+1} -T_n)\|u^n\|_{\beta,\beta}
    \\
& + c_{\alpha,\beta,\beta^\prime} c_S c_{DG} \sum_{i=0}^{n-1}e^{-\lambda(T_n -T_{i+1})} \ltn\theta_{T_i}\omega\rtn_{\beta^\prime,0,T_{i+1}-T_i}(T_{i+1}-T_i)^{\beta^\prime} \|u^i\|_{\beta,\beta}\\ & +
  c_{\alpha,\beta,\beta^\prime} c_S c_{DG} \ltn\theta_{T_n}\omega\rtn_{\beta^\prime,0,T_{n+1}-T_n}(T_{n+1}-T_n)^{\beta^\prime}\|u^n\|_{\beta,\beta}\\
&\le c_S e^{-\lambda T_n }\|u_0\|+ c_S c_{DF} \mu \sum_{i=0}^{n-1}e^{-\lambda(T_n-T_{i+1})}  \|u^i\|_{\beta} + c_S c_{DF} \mu \|u^n\|_{\beta,\beta}.
\end{split}
\end{align}
Multiplying both sides with $e^{\lambda T_n }$, setting $y_n= e^{\lambda T_n } \|u_{n}\|_{\beta,\beta}$, $n =0,1, \ldots$, gives
$$ y_n \leq \frac{c_S}{1-c_S c_{DF} \mu} \|u_0\| + \frac{c_S c_{DF} \mu}{1- c_S c_{DF} \mu} \sum_{i=0}^{n-1} e^{\lambda (T_{i+1}-T_i)} y_i, \qquad n=0,1, \ldots .$$
Therefore, applying Lemma \ref{l9} with
$$ c=  \frac{c_S}{1-c_S c_{DF} \mu} \|u_0\|, \qquad g_j= \frac{c_S c_{DF} \mu}{1- c_S c_{DF}\mu} e^{\lambda (T_{j+1}-T_j)}, \quad j=0,1, \ldots, $$
yields
\begin{align*}
\| u^n \|_{\beta,\beta} \leq \frac{c_S}{1- c_S c_{DF} \mu} \|u_0\|  \prod_{j=0}^{n-1} \left( 1 + \frac{c_S c_{DF} \mu}{1-c_S c_{DF}\mu}e^{\lambda (T_{j+1}-T_j)} \right) e^{-\lambda T_n}, \qquad n =0,1, \ldots.
\end{align*}

Note that $\bf{(S1)}$ trivially implies that
$$ T_{i+1}- T_i \leq \mu, \qquad i=0,1, \ldots . $$
Hence we obtain
\begin{align} \label{next_ste}
\begin{split}
\| u^n \|_{\beta,\beta} &\leq \frac{c_S}{1-c_S c_{DF} \mu} \|u_0\|  e^{- \lambda T_n}  \left( 1 + \frac{c_S c_{DF} \mu}{1- c_S c_{DF} \mu}e^{{\lambda \mu}} \right)^n\\&\leq \frac{c_S}{1- c_S c_{DF} \mu} \|u_0\|  e^{- \lambda T_n + \frac{c_S c_{DF} \mu}{1- c_S c_{DF}\mu}e^{\lambda \mu} n}, \quad n =0,1, \ldots.
\end{split}
\end{align}
 It remains to analyze the exponent of the last expression more precisely. Thanks to assumption {\bf(S2)} there exists $\eps>0$ and $n_{D(\mu, \eps,  \omega)}\in \NN$ such that
$$ \frac{T_n}{D-\eps} \geq  n, \qquad n \geq n_{D(\mu,\eps,\omega)}. $$
Using this estimate we have
 $$ - \lambda T_n + \frac{c_S c_{DF} \mu}{1- c_S c_{DF}\mu}e^{\lambda \mu} n \leq \left(-\lambda+ \frac{ c_S c_{DF}\mu }{(1- c_S c_{DF}\mu)(D-\eps)}e^{\lambda \mu} \right)T_n=-\rho T_n, \qquad n \geq n_{D(\mu,\eps, \omega)},$$
where $\eps>0$ is chosen so small that $0<\rho<\rho^\ast$. Then replacing this inequality in (\ref{next_ste}) we obtain
 \begin{align}\label{e1}
\| u^n \|_{\beta,\beta} \leq \frac{c_S}{1 - c_S c_{DF}\mu} \|u_0\| e^{ -\rho T_n }, \qquad n \geq n_{D(\mu,\eps, \omega)}.
 \end{align}
Given any $t > T_{n_{D(\mu,\eps,\omega)}}(\omega)$ there exists $n(t) > n_{D(\mu,\eps,\omega)}$ such that $t\in [T_{n(t)}(\omega),T_{n(t)+1}(\omega))$, that is, $T_{n(t)}(\omega) \geq t-\mu$,  and from (\ref{e1}) and {\bf(S3)} we can derive that
 \begin{align*}
  \| u(t) \| &\leq \frac{c_S}{1 - c_S c_{DF} \mu} \|u_0\| e^{ -\rho (t-\mu)}.
 \end{align*}
The conclusion follows  because the above calculations are true for any $\eps>0$.
\end{proof}

From the conclusion of the last theorem we obtain exponential stability of the dynamical system $u$. Indeed, suppose that $\|u_0\|\le R$, then we obtain
\begin{equation*}
  \|u(t)\|\le C(\omega)e^{-\rho t}
\end{equation*}
for $t\ge 0$ where
\begin{equation*}
  C(\omega)=C(\omega)=\frac{c_S}{1 - c_S c_{DF} \mu}R  e^{\rho\mu}+ e^{\rho T_{n_0(D,\eps,\omega)}} \sup_{\|u_0\|\le R}\sup_{t\le T_{n_0(D,\eps,\omega)}} \|u_{u_0}(t,\omega)\|
\end{equation*}
where $T_{n_0(D,\eps,\mu)}$ is defined in the proof of the last theorem. The finiteness of $C(\omega)$ follows by Chen {\it et al.} \cite{CGGSch14}.

\section{Stopping time analysis}\label{s3}

In this section, our main goal is to prove that we can consider as noisy input a fractional Brownian motion with Hurst parameter bigger than $1/2$. For this process, we prove that the required assumptions {\bf(S1)}, {\bf (S2)} and {\bf(S3)} are fulfilled.

Given $H\in (0,1)$, a continuous centered Gau{\ss}ian process
$\beta^H=(\beta^H(t), t\in\mathbb{R})$, with  covariance function
\begin{equation*}
    R(s,t)=\frac{1}{2}(|t|^{2H}+|s|^{2H}-|t-s|^{2H}), \qquad s,\,t \in \RR,
\end{equation*}
on an appropriate probability space, is called a two--sided one-dimensional fractional Brownian
motion (fBm) with Hurst parameter $H$. When $H=1/2$, $B$ is the standard Brownian motion.

Assume that $Q$ is a bounded and symmetric linear operator on $V$ and that $Q$ is of trace class, i.e., for a complete orthonormal basis $(e_i)_{i\in {\mathbb N}}$ in $V$ there exists a sequence of nonnegative numbers $(q_i)_{i\in {\mathbb N}}$ such that $\text{tr}(Q):=\sum_{i=1}^{\infty}q_i <\infty$. Then a continuous $V$-valued fractional Brownian motion $B^H$ with  covariance operator $Q$ and Hurst parameter $H$ is defined by
\begin{equation*}
   B^H(t)=\sum_{i=1}^{\infty} \sqrt{q_i} \beta_i^H(t) e_i,\quad t\in\mathbb{R},
\end{equation*}
where $(\beta_i^H)_{i\in{\NN}}$ is a sequence of stochastically independent one-dimensional fBms with the same Hurst parameter $H$.

We consider a canonical version of this process given by the probability space $(C_0(\RR;V),\bB(C_0(\RR;V)),\PP)$ where $\PP$ is the Gau{\ss}-measure generated by $B^H$.
On this probability space we can also introduce the  shift
$$\theta_t\omega(\cdot)=\omega(\cdot+t)-\omega(t).$$
It is known that $(C_0(\RR;V),\bB(C_0(\RR;V)),\PP,\theta)$ is an ergodic metric dynamical system, see \cite{MasSchm04} and \cite{GS11}. This canonical process has a version which is $\beta^{\prime\prime}$-H\"older continuous on any compact
interval $[-k,k]$ for $ \beta^{\prime\prime}< H$.\\

For $H>1/2$, in what follows we set the parameters $1/2<\beta<\beta^\prime<\beta^{\prime\prime}<H$. Let $\Omega$ be the $(\theta_t)_{t\in \RR}$-invariant set of paths $\omega:\RR\to V$ which are $\beta^{\prime\prime}$-H{\"o}lder continuous on any compact subinterval of $\RR$ and are zero at zero. Then, see \cite{CGGSch14}, $\Omega \in\bB(C_0(\RR;V))$ and it is $(\theta_t)_{t\in \RR}$-invariant.

\begin{lemma}\label{l2} Let $\omega \in \Omega$.
The mapping $t\mapsto \ltn\omega\rtn_{\beta^\prime,0,t}$ is continuous on $\RR^+$,  and the mapping
$t\mapsto \ltn\omega\rtn_{\beta^\prime,t,0}$ is continuous on $\RR^-$.
\end{lemma}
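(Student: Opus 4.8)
The plan is to regard $\phi(t):=\ltn\omega\rtn_{\beta^\prime,0,t}$ as a nondecreasing function of $t\ge 0$ and to rule out jumps by exploiting the extra regularity built into $\Omega$. First I would record two preliminary facts, both consequences of $\omega\in\Omega$, i.e. of the $\beta^{\prime\prime}$-Hölder continuity of $\omega$ on compacts with $\beta^{\prime\prime}>\beta^\prime$. Fix $T>0$ and set $K=\ltn\omega\rtn_{\beta^{\prime\prime},0,T}<\infty$. For $0\le s<r\le T$,
\[
\frac{\|\omega(r)-\omega(s)\|}{(r-s)^{\beta^\prime}}\le K\,(r-s)^{\beta^{\prime\prime}-\beta^\prime}\le K\,T^{\beta^{\prime\prime}-\beta^\prime},
\]
so $\phi(t)<\infty$ for $t\le T$; moreover the function $f(s,r):=\|\omega(r)-\omega(s)\|/(r-s)^{\beta^\prime}$ for $s<r$, together with $f(s,s):=0$, extends to a continuous function on the compact triangle $\Delta_T:=\{(s,r):0\le s\le r\le T\}$ (continuity off the diagonal is immediate, and on the diagonal it follows from the displayed bound). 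Hence $\phi(t)=\max_{\Delta_t}f$ is attained for every $t\in[0,T]$, and $\phi$ is nondecreasing because $\Delta_t\subseteq\Delta_{t^\prime}$ whenever $t\le t^\prime$.

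Next I would prove right continuity at a fixed $t_0>0$. Monotonicity gives $\lim_{t\downarrow t_0}\phi(t)\ge\phi(t_0)$. Conversely, take $t_n\downarrow t_0$ and maximizers $(s_n,r_n)\in\Delta_{t_n}$ with $\phi(t_n)=f(s_n,r_n)$; since $0\le s_n\le r_n\le t_n\le t_1$, after passing to a subsequence $(s_n,r_n)\to(s_*,r_*)$ with $0\le s_*\le r_*\le t_0$ (because $r_n\le t_n\to t_0$), so $(s_*,r_*)\in\Delta_{t_0}$. By the continuity of $f$ on $\Delta_{t_1}$ established in the first step, $\phi(t_n)=f(s_n,r_n)\to f(s_*,r_*)\le\phi(t_0)$, hence $\lim_{t\downarrow t_0}\phi(t)=\phi(t_0)$. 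It is exactly here that the strict inequality $\beta^{\prime\prime}>\beta^\prime$ is indispensable: even when near-maximizers collapse onto the diagonal ($r_n-s_n\to0$) one still has $f(s_n,r_n)\to0$, which would fail for a merely $\beta^\prime$-Hölder path and could create a genuine jump.

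Then I would prove left continuity at $t_0>0$. Monotonicity gives $\lim_{t\uparrow t_0}\phi(t)\le\phi(t_0)$; for the reverse inequality, if $\phi(t_0)=0$ there is nothing to prove, so assume $\phi(t_0)>0$ and pick a maximizer $(s_*,r_*)\in\Delta_{t_0}$, which then satisfies $s_*<r_*$. If $r_*<t_0$, then $(s_*,r_*)\in\Delta_t$ for all $t\in(r_*,t_0)$, whence $\phi(t)\ge f(s_*,r_*)=\phi(t_0)$. If $r_*=t_0$, then for $t\in(s_*,t_0)$ the pair $(s_*,t)$ lies in $\Delta_t$, so $\phi(t)\ge f(s_*,t)$, and $f(s_*,t)\to f(s_*,t_0)=\phi(t_0)$ as $t\uparrow t_0$ by continuity of $\omega$. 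In both cases $\liminf_{t\uparrow t_0}\phi(t)\ge\phi(t_0)$. Finally, continuity at $t=0$ (if $0$ is counted in $\RR^+$) follows from $\phi(0)=0$ and $\phi(t)\le \ltn\omega\rtn_{\beta^{\prime\prime},0,1}\,t^{\beta^{\prime\prime}-\beta^\prime}\to0$ as $t\downarrow0$; and the statement on $\RR^-$ is obtained by applying the above to the reflected path $\tilde\omega(u):=\omega(-u)$, which again lies in $\Omega$ and satisfies $\ltn\tilde\omega\rtn_{\beta^\prime,0,t}=\ltn\omega\rtn_{\beta^\prime,-t,0}$ for $t>0$.

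I expect the only real obstacle to be the diagonal-collapse case in the right-continuity step, which is precisely why the preliminary step — the continuous extension of $f$ to the closed triangle $\Delta_T$ using $\beta^{\prime\prime}>\beta^\prime$ — carries the weight of the argument; once that is available, the remainder is routine compactness and monotonicity bookkeeping.
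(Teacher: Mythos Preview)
Your proof is correct, but it follows a genuinely different route from the paper's. The paper freezes the path at $t_0$, defining $\omega^{t_0}(s)=\omega(s)$ for $s\le t_0$ and $\omega^{t_0}(s)=\omega(t_0)$ for $s\ge t_0$; then $\ltn\omega^{t_0}\rtn_{\beta^\prime,0,t}=\ltn\omega\rtn_{\beta^\prime,0,t_0}$ and the reverse triangle inequality for the seminorm yields the one-line quantitative bound
\[
\big|\,\ltn\omega\rtn_{\beta^\prime,0,t}-\ltn\omega\rtn_{\beta^\prime,0,t_0}\big|
\le \ltn\omega-\omega^{t_0}\rtn_{\beta^\prime,0,t}
=\ltn\omega\rtn_{\beta^\prime,t_0,t}
\le \ltn\omega\rtn_{\beta^{\prime\prime},0,t}\,(t-t_0)^{\beta^{\prime\prime}-\beta^\prime},
\]
which dispatches both one-sided limits at once and in fact shows that $\phi$ is locally $(\beta^{\prime\prime}-\beta^\prime)$-H\"older. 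Your argument instead extends the H\"older quotient $f(s,r)$ continuously to the closed triangle (this is where $\beta^{\prime\prime}>\beta^\prime$ enters for you) and then runs separate compactness/maximizer arguments for right and left continuity. Both approaches hinge on exactly the same regularity gap; the paper's is shorter and delivers an explicit modulus of continuity, while yours is more hands-on and makes transparent why diagonal collapse of near-maximizers is harmless.
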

\begin{proof}
We only prove the continuity of the first mapping, the second mapping can be treated analogously. If $t\geq t_0$, define $\omega^{t_0}$ given by
\begin{align}\label{ome}
\begin{split}
\omega^{t_0}(s)=\left\{
\begin{array}{lcl}
\omega(s)&:&{\rm for }\ s< t_0,\\
\omega(t_0)&:&{\rm for }\ s\geq t_0.
\end{array}\right.
\end{split}
\end{align}
Thus, for $t \geq t_0$,
\[ \bigg| \ltn \omega\rtn_{\beta^\prime,0,t}-\ltn \omega\rtn_{\beta^\prime,0,t_0}\bigg|
 =\bigg|\ltn \omega\rtn_{\beta^\prime,0,t}-\ltn \omega^{t_0}\rtn _{\beta^\prime,0,t} \bigg| \le \ltn \omega- \omega^{t_0}\rtn _{\beta^\prime,0,t}=\ltn \omega\rtn _{\beta^\prime,t_0,t},\]
hence
\begin{align*}
\limsup_{t\downarrow t_0}\bigg| \ltn \omega\rtn_{\beta^\prime,0,t}-\ltn \omega\rtn_{\beta^\prime,0,t_0}\bigg|& \leq     \limsup_{t\downarrow t_0} \left( \ltn \omega\rtn_{\beta^{\prime \prime},t_0,t} (t-t_0)^{\beta^{\prime \prime}-\beta^{\prime }} \right) \\ &\leq     \limsup_{t\downarrow t_0} \ltn \omega\rtn_{\beta^{\prime \prime},0,t}  \lim_{t\downarrow t_0} (t-t_0)^{\beta^{\prime \prime}-\beta^{\prime }} =0,
\end{align*}
and, by the same argument, $\liminf_{t\downarrow t_0}\big| \ltn \omega\rtn_{\beta^\prime,0,t}-\ltn \omega\rtn_{\beta^\prime,0,t_0}\big|=0$, therefore
\[\lim_{t\downarrow t_0} \ltn \omega\rtn_{\beta^\prime,0,t}=\ltn \omega\rtn_{\beta^\prime,0,t_0}.\]
The case $t\leq t_0$ is similar.
\end{proof}
Now for a given constant $\mu>0$,  define
\begin{eqnarray}
T(\omega)&=&\inf\{\tau>0:c_{\alpha,\beta,\beta^\prime}  c_{DG}\ltn\omega\rtn_{\beta^\prime,0,\tau}\tau^{\beta^\prime}+ c_{DF}\tau> c_{DF}\mu\},\nonumber \\[-1.5ex]
\label{eq29}\\[-1.5ex]
\widehat T(\omega)&=&\sup\{\tau<0:c_{\alpha,\beta,\beta^\prime}  c_{DG}\ltn\omega\rtn_{\beta^\prime,\tau,0}(-\tau)^{\beta^\prime}- c_{DF}\tau> c_{DF}\mu\}.\nonumber
\end{eqnarray}

\begin{lemma}\label{l3} The mappings $T,  \, \widehat T: \Omega \rightarrow \mathbb{R}$ defined by \eqref{eq29} are measurable.
 Moreover,  we have $T(\omega), \, -\widehat T(\omega)\in (0,\mu]$ for all $\omega \in \Omega$ and the following relations are fulfilled:
\begin{equation*}
T(\omega)=-\widehat T(\theta_{T(\omega)}\omega),\quad \widehat T(\omega)=- T(\theta_{\widehat T(\omega)}\omega), \qquad \omega \in \Omega.
\end{equation*}
\end{lemma}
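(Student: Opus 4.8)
The statement has three distinct parts, and I would handle them in the natural order: measurability, the bound $T(\omega),-\widehat T(\omega)\in(0,\mu]$, and the cocycle-type relations. The key analytic input is Lemma~\ref{l2}, which gives continuity of $t\mapsto\ltn\omega\rtn_{\beta',0,t}$ on $\RR^+$ and of $t\mapsto\ltn\omega\rtn_{\beta',t,0}$ on $\RR^-$; this is what makes the infimum/supremum in \eqref{eq29} behave well. Set $\Phi(\omega,\tau):=c_{\alpha,\beta,\beta'}c_{DG}\ltn\omega\rtn_{\beta',0,\tau}\tau^{\beta'}+c_{DF}\tau$, so that $T(\omega)=\inf\{\tau>0:\Phi(\omega,\tau)>c_{DF}\mu\}$, and note $\tau\mapsto\Phi(\omega,\tau)$ is continuous (product/composition of continuous maps), vanishes at $\tau=0$, and is strictly increasing in $\tau$ because the $c_{DF}\tau$ summand is strictly increasing while $\tau\mapsto\ltn\omega\rtn_{\beta',0,\tau}\tau^{\beta'}$ is nondecreasing (the seminorm is monotone in the interval and $\tau^{\beta'}$ is increasing). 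Strict monotonicity plus continuity means $\Phi(\omega,\cdot)$ hits the level $c_{DF}\mu$ exactly once, and the infimum is attained: $\Phi(\omega,T(\omega))=c_{DF}\mu$.

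\textbf{The bound and measurability.} Since $\Phi(\omega,0)=0<c_{DF}\mu$ and $\Phi(\omega,\mu)=c_{\alpha,\beta,\beta'}c_{DG}\ltn\omega\rtn_{\beta',0,\mu}\mu^{\beta'}+c_{DF}\mu\ge c_{DF}\mu$, the intermediate value theorem forces the crossing point to lie in $(0,\mu]$; this gives $T(\omega)\in(0,\mu]$, and symmetrically $-\widehat T(\omega)\in(0,\mu]$ working on $\RR^-$ with the obvious analogue $\widehat\Phi(\omega,\tau):=c_{\alpha,\beta,\beta'}c_{DG}\ltn\omega\rtn_{\beta',\tau,0}(-\tau)^{\beta'}-c_{DF}\tau$ for $\tau<0$. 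For measurability, I would use that $\{\omega:T(\omega)<a\}=\{\omega:\Phi(\omega,\tau)>c_{DF}\mu\text{ for some }\tau<a\}=\bigcup_{\tau\in\QQ\cap(0,a)}\{\omega:\Phi(\omega,\tau)>c_{DF}\mu\}$ (density of rationals together with continuity of $\Phi(\omega,\cdot)$), and each set $\{\omega:\Phi(\omega,\tau)>c_{DF}\mu\}$ is measurable because $\omega\mapsto\ltn\omega\rtn_{\beta',0,\tau}$ is measurable — it is a sup over $0<s<t\le\tau$ of the continuous functionals $\omega\mapsto(s)^{\beta'}\|\omega(t)-\omega(s)\|/(t-s)^{\beta'}$, hence can be written as a countable sup over rational $s,t$. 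The same argument, with $\widehat\Phi$, handles $\widehat T$.

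\textbf{The flow relations.} For $T(\omega)=-\widehat T(\theta_{T(\omega)}\omega)$, write $s:=T(\omega)$ and compute, for $\tau<0$, using the shift definition $\theta_s\omega(\cdot)=\omega(\cdot+s)-\omega(s)$: increments of $\theta_s\omega$ on $[\tau,0]$ equal increments of $\omega$ on $[s+\tau,s]$, so $\ltn\theta_s\omega\rtn_{\beta',\tau,0}=\ltn\omega\rtn_{\beta',s+\tau,s}$ (the seminorm depends only on increments). Hence $\widehat\Phi(\theta_s\omega,\tau)=c_{\alpha,\beta,\beta'}c_{DG}\ltn\omega\rtn_{\beta',s+\tau,s}(-\tau)^{\beta'}-c_{DF}\tau$. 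Substituting $\tau=-s$ gives $\widehat\Phi(\theta_s\omega,-s)=c_{\alpha,\beta,\beta'}c_{DG}\ltn\omega\rtn_{\beta',0,s}s^{\beta'}+c_{DF}s=\Phi(\omega,s)=c_{DF}\mu$, which (by the single-crossing property of $\widehat\Phi(\theta_s\omega,\cdot)$ established above) identifies $-s=\widehat T(\theta_s\omega)$, i.e. $\widehat T(\theta_{T(\omega)}\omega)=-T(\omega)$. The second relation $\widehat T(\omega)=-T(\theta_{\widehat T(\omega)}\omega)$ follows by the mirror-image computation with the roles of $\RR^+$ and $\RR^-$ interchanged.

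\textbf{Expected main obstacle.} The genuinely delicate point is being careful that the crossing is clean — i.e., that the infimum defining $T$ is actually attained and equals the unique solution of $\Phi(\omega,\tau)=c_{DF}\mu$ — rather than just an accumulation of times where the strict inequality holds. This rests on continuity of $\Phi(\omega,\cdot)$ (Lemma~\ref{l2}) \emph{and} on strict monotonicity; once both are in hand everything else is bookkeeping. The second mildly fiddly point is the measurability of $\omega\mapsto\ltn\omega\rtn_{\beta',0,\tau}$, where one must reduce the uncountable supremum to a countable one using continuity of paths (every $\omega\in\Omega$ is even $\beta''$-Hölder, so the difference quotients are continuous in $(s,t)$ and the sup over rationals equals the sup over all pairs). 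Neither is deep, but both need to be stated explicitly for the proof to be rigorous.
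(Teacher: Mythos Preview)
Your proposal is correct and follows essentially the same route as the paper: define the level function $\Phi$ (the paper's $f$), use Lemma~\ref{l2} together with the $c_{DF}\tau$ term to obtain continuity and strict monotonicity, locate the unique crossing in $(0,\mu]$, reduce measurability to that of the H\"older seminorm, and identify $-T(\omega)$ as the unique solution of the defining equation for $\widehat T(\theta_{T(\omega)}\omega)$. One small slip to fix: the H\"older seminorm $\ltn\omega\rtn_{\beta',0,\tau}$ is the supremum of $\|\omega(t)-\omega(s)\|/(t-s)^{\beta'}$ without the extra factor $(s)^{\beta'}$ you wrote (that factor belongs to the damped norm $\ltn\cdot\rtn_{\beta,\beta}$), but this does not affect the argument.
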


\begin{proof}
Recall that $\Omega$ is the set of paths $\omega:\RR\to V$ which are $\beta^{\prime\prime}$-H{\"o}lder continuous on any compact subinterval of $\RR$ and are zero at zero.
For $(\omega,t)\in C^{\beta^{\prime\prime}}([0,\mu];V)\times [0,\mu]$,  define the function
$$f(\omega,t)=c_{\alpha,\beta,\beta^\prime}  c_{DG}\ltn\omega\rtn_{\beta^\prime,0,t}t^{\beta^\prime}+ c_{DF}t- c_{DF}\mu$$
and similarly, for $(\omega,t)\in C^{\beta^{\prime\prime}}([-\mu,0];V)\times [-\mu,0]$, we can define the mapping
$$g(\omega,t)=c_{\alpha,\beta,\beta^\prime}  c_{DG}\ltn\omega\rtn_{\beta^\prime,t,0}(-t)^{\beta^\prime}- c_{DF}t- c_{DF}\mu.$$

In virtue of Lemma \ref{l2}, $f(\omega, \cdot)$ is continuous, and trivially,  it is strictly increasing in $t$, such that $f(\omega,0)<0$ and $f(\omega,\mu) \geq 0$ (indeed $f(\omega,\mu) = 0$ when we do not have noise, namely, when $\omega\equiv 0$. Otherwise, $f(\omega,\mu) > 0$). Hence, for each $\omega$ there exists a unique $T(\omega) \in (0, \mu]$ such that $f(\omega,T(\omega)) = 0.$ Thus, we can reformulate $T(\omega)$ as
 \begin{equation}\label{stnew}
c_{\alpha,\beta,\beta^\prime} c_{DG} \ltn\omega\rtn_{\beta^\prime,0,T(\omega)}T(\omega)^{\beta^\prime}+ c_{DF}T(\omega)=  c_{DF}\mu.
\end{equation}

Moreover, $T(\omega)$ is measurable which follows from the fact that $f(\omega,\cdot)$ is strictly increasing w.r.t. $t$ and
\begin{align*}
  \{\omega \in \Omega: T(\omega)< \kappa\}&=\{\omega\in \Omega:c_{\alpha,\beta,\beta^\prime}  c_{DG}\ltn\omega\rtn_{\beta^\prime,0,\kappa} \kappa^{\beta^\prime}+ c_{DF} \kappa>  c_{DF}\mu\}\\
  &=\ltn\cdot\rtn_{\beta^\prime,0,\kappa}^{-1}\bigg(\frac{c_{DF}(\mu- \kappa)}{c_{\alpha,\beta,\beta^\prime}  c_{DG}  \kappa^{\beta^\prime}},\infty\bigg),
  \end{align*}
and the right hand side of this expression is measurable due to the measurability of $\ltn\cdot\rtn_{\beta^\prime,0,\kappa}$ with respect to $\bB(C_0(\RR;V))$.

\smallskip
Using the same arguments for $g$, we can show that $\widehat T(\omega)$ defined by (\ref{eq29}) belongs to $[-\mu,0)$ and it is measurable. In particular we obtain
 \begin{equation*}
c_{\alpha,\beta,\beta^\prime}  c_{DG}\ltn\theta_{T(\omega)}\omega\rtn_{\beta^\prime, \widehat T(\theta_{T(\omega)}\omega),0}(- \widehat T(\theta_{T(\omega)}\omega))^{\beta^\prime}- c_{DF} \widehat T(\theta_{T(\omega)}\omega)=   c_{DF}\mu,
\end{equation*}
hence
$$   c_{\alpha,\beta,\beta^\prime}  c_{DG} \ltn \omega\rtn_{\beta^\prime, T(\omega) + \widehat{T}(\theta_{T(\omega)}\omega)   , T(\omega)}(-\widehat T(\theta_{T(\omega)}\omega))^{\beta^\prime} - c_{DF} \widehat T(\theta_{T(\omega)}\omega) =  c_{DF} \mu ,  $$
and, comparing this last equality with (\ref{stnew}), we derive that
 $T(\omega)=-\widehat T(\theta_{T(\omega)}\omega)$ due to the uniqueness of solution. The last statement of the lemma follows analogously.
\end{proof}

From the definition of $T(\omega)$ and $\widehat T(\omega)$, identifying $T(\omega)$ with $T_1(\omega)$ and $\widehat T(\omega)$ with $T_{-1}(\omega)$, we can define a sequence of  stopping times $(T_i(\omega))_{i\in \ZZ}$ as follows
\begin{eqnarray}\label{stop}
    T_i(\omega)=\left\{\begin{array}{lcr}0&:&i=0,\\T_{i-1}(\omega)+T(\theta_{T_{i-1}(\omega)}\omega)&:&i\in \NN,\\
    T_{i+1}(\omega)+\widehat T(\theta_{T_{i+1}(\omega)}\omega)&:&i\in -\NN.\end{array}\right.
\end{eqnarray}
The stopping times satisfy {\bf (S1)} by construction: since $T_{i+1}(\omega)-T_i(\omega)=T_1(\theta_{T_i(\omega)}\omega)$, in virtue of (\ref{stnew}) we have, for  $i \in \mathbb{N}$,
$$ 	c_{\alpha,\beta,\beta^\prime}  c_{DG}\ltn\theta_{T_i(\omega)}\omega\rtn_{\beta^\prime,0,T_{i+1}(\omega)-T_i(\omega)}(T_{i+1}(\omega)-T_i(\omega))^{\beta^\prime} +  c_{DF}(T_{i+1}(\omega)-T_i(\omega)) =  c_{DF}\mu.$$
Analogously,  for $ i \in -\NN$,
\begin{equation*}
c_{\alpha,\beta,\beta^\prime}  c_{DG}\ltn\theta_{T_i(\omega)}\omega\rtn_{\beta^\prime,  T_{i-1}(\omega)-T_{i}(\omega),0}( T_{i}(\omega)-T_{i-1}(\omega))^{\beta^\prime}- c_{DF} (T_{i-1}(\omega)-T_{i}(\omega))= c_{DF} \mu.\end{equation*}

\begin{lemma}\label{l1}
The sequence $(T_i(\omega))_{i\in \ZZ}$ satisfies the following properties:
\begin{enumerate}
\item[(i)] For $i,\,j\in \ZZ$ we have
\[
T_i(\omega)+T_j(\theta_{T_i(\omega)}\omega)=T_{i+j}(\omega).
\]
\item[(ii)]  Let $t_1\le t_2$ for $t_1,\,t_2\in\RR$. Then we have
\begin{align}\label{ek}
\begin{split}
    t_1+&  T(\theta_{t_1}\omega)\le t_2+ T(\theta_{t_2}\omega),\\
    t_1+& \widehat T(\theta_{t_1}\omega)\le t_2+ \widehat T(\theta_{t_2}\omega).
  \end{split}
\end{align}
If $t_1< t_2$, then the corresponding inequalities are strict, that is,
\begin{align}\label{ekb}
\begin{split}
    t_1+&  T(\theta_{t_1}\omega)< t_2+ T(\theta_{t_2}\omega),\\
    t_1+& \widehat T(\theta_{t_1}\omega)< t_2+ \widehat T(\theta_{t_2}\omega).
  \end{split}
\end{align}
\item[(iii)] If $t_1 \leq t_2< t_1+T(\theta_{t_1}\omega)$, then it holds
\begin{align}\label{eq33}
\begin{split}
t_1 \le t_2& < t_1+T(\theta_{t_1}\omega) \le t_2+
T(\theta_{t_2}\omega)\\
& < t_1+ T_2(\theta_{t_1}\omega)\leq  t_2+ T_2(\theta_{t_2}\omega)\\
& < \qquad \cdots  \qquad \leq \qquad \cdots
    \end{split}
    \end{align}
\end{enumerate}
\end{lemma}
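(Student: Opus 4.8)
The plan is to derive all three assertions from the defining relation \eqref{stnew} together with the monotonicity properties established in Lemmas \ref{l2} and \ref{l3}, reducing everything to the single-step building blocks $T(\theta_t\omega)$ and $\widehat T(\theta_t\omega)$. For part (i), I would first treat the case $i,j\ge 0$ by induction on $j$: the case $j=0$ is immediate from $T_0\equiv 0$, and for the inductive step I would use the recursion in \eqref{stop} for $T_{j+1}(\theta_{T_i(\omega)}\omega)$, namely $T_{j+1}(\theta_{T_i(\omega)}\omega)=T_j(\theta_{T_i(\omega)}\omega)+T(\theta_{T_j(\theta_{T_i(\omega)}\omega)}\theta_{T_i(\omega)}\omega)$, and then invoke the cocycle property $\theta_{s}\theta_t=\theta_{s+t}$ of the Wiener shift together with the induction hypothesis $T_i(\omega)+T_j(\theta_{T_i(\omega)}\omega)=T_{i+j}(\omega)$; this collapses the right-hand side to $T_{i+j}(\omega)+T(\theta_{T_{i+j}(\omega)}\omega)=T_{i+j+1}(\omega)$. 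The mixed-sign cases and the purely negative case follow by the same bookkeeping using the $\widehat T$-recursion and the identities $T(\omega)=-\widehat T(\theta_{T(\omega)}\omega)$, $\widehat T(\omega)=-T(\theta_{\widehat T(\omega)}\omega)$ from Lemma \ref{l3}, which let one "undo" a step; I expect this case-chasing to be the most tedious but not conceptually hard part.

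For part (ii), the key point is monotonicity of the map $t\mapsto t+T(\theta_t\omega)$. Fix $t_1\le t_2$ and write $\tau_j=T(\theta_{t_j}\omega)$. From \eqref{stnew} applied to $\theta_{t_j}\omega$ we have
\begin{equation*}
c_{\alpha,\beta,\beta^\prime} c_{DG}\ltn\theta_{t_j}\omega\rtn_{\beta^\prime,0,\tau_j}\tau_j^{\beta^\prime}+c_{DF}\tau_j=c_{DF}\mu,\qquad j=1,2,
\end{equation*}
and $\ltn\theta_{t_j}\omega\rtn_{\beta^\prime,0,\tau_j}=\ltn\omega\rtn_{\beta^\prime,t_j,t_j+\tau_j}$. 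Suppose for contradiction that $t_1+\tau_1>t_2+\tau_2$; then $[t_2,t_2+\tau_2]\subset[t_1,t_1+\tau_1]$, so $\ltn\omega\rtn_{\beta^\prime,t_2,t_2+\tau_2}\le\ltn\omega\rtn_{\beta^\prime,t_1,t_1+\tau_1}$ and also $\tau_2=(t_2+\tau_2)-t_2<(t_1+\tau_1)-t_1=\tau_1$ (using $t_1\le t_2$), whence the left-hand side of the defining equation for $j=2$ would be strictly smaller than that for $j=1$, contradicting that both equal $c_{DF}\mu$. This gives the first inequality in \eqref{ek}; the second, for $\widehat T$, is obtained by the symmetric argument using $g$ and the sup-definition, or alternatively by applying the already-proved first inequality to the identity $\widehat T(\omega)=-T(\theta_{\widehat T(\omega)}\omega)$. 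For the strict versions \eqref{ekb} when $t_1<t_2$, I would argue that equality $t_1+\tau_1=t_2+\tau_2$ would force $\ltn\omega\rtn_{\beta^\prime,t_1,t_1+\tau_1}\le\ltn\omega\rtn_{\beta^\prime,t_2,t_2+\tau_2}=\ltn\omega\rtn_{\beta^\prime,t_2,t_1+\tau_1}$ while $\tau_1>\tau_2$, again violating that both sides equal $c_{DF}\mu$ — here one uses strict monotonicity in the length variable. I expect the bookkeeping with the shifted H\"older seminorms and the direction of the monotonicity to be the main obstacle, since one must be careful that shrinking the interval cannot increase the seminorm.

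Part (iii) is then essentially a formal consequence. Given $t_1\le t_2<t_1+T(\theta_{t_1}\omega)$, the first line of \eqref{eq33} combines the hypothesis with \eqref{ek} (applied at $t_1\le t_2$); for the strict inequality $t_2+T(\theta_{t_2}\omega)<t_1+T_2(\theta_{t_1}\omega)$ I would note $t_1+T_2(\theta_{t_1}\omega)=(t_1+T(\theta_{t_1}\omega))+T(\theta_{t_1+T(\theta_{t_1}\omega)}\omega)$ by part (i) and the cocycle property, and since $t_2<t_1+T(\theta_{t_1}\omega)$ the strict inequality \eqref{ekb} applied to the pair $t_2<t_1+T(\theta_{t_1}\omega)$ gives exactly $t_2+T(\theta_{t_2}\omega)<(t_1+T(\theta_{t_1}\omega))+T(\theta_{t_1+T(\theta_{t_1}\omega)}\omega)$. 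Iterating this — at each stage the pair $t_1+T_{k}(\theta_{t_1}\omega)\le t_2+T_k(\theta_{t_2}\omega)<t_1+T_{k+1}(\theta_{t_1}\omega)$ feeds the next application of \eqref{ek}/\eqref{ekb} via part (i) — produces the full chain \eqref{eq33} by induction on $k$.
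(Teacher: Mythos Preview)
Your proposal is correct and follows essentially the same route as the paper: part (i) by induction on the recursion \eqref{stop} together with the cocycle property of $\theta$ (the paper just says this ``follows easily by induction''), part (ii) by the identical contradiction argument comparing the two instances of \eqref{stnew} via the inclusion $[t_2,t_2+\tau_2]\subset[t_1,t_1+\tau_1]$, and part (iii) by iterating (ii) through part (i). One small slip to fix: in your handling of the strict case of (ii) the displayed inequality should read $\ltn\omega\rtn_{\beta^\prime,t_2,t_2+\tau_2}\le\ltn\omega\rtn_{\beta^\prime,t_1,t_1+\tau_1}$ (containment shrinks the seminorm, not enlarges it); with that reversal the contradiction with $\tau_1>\tau_2$ goes through exactly as in the non-strict case, which the paper in fact leaves implicit.
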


Let us emphasize that (\ref{eq33}) can be presented in several ways, depending on the relationship between $t_1$, $t_2$ and $ t_1+T(\theta_{t_1}\omega)$, and accordingly we use the sign $<$ or $\leq$.

\begin{proof}
Thanks to the definition of the stopping times (\ref{stop}), by induction the first of the properties follows easily.

Let us prove (ii) by contradiction, recalling first that the constant $c_{DF} >0$. Assume that $t_1\leq t_2$ and $t_1+T(\theta_{t_1}\omega)>t_2 + T(\theta_{t_2}\omega)$. This means that
\begin{eqnarray*}
 c_{DF}  \mu&=&c_{\alpha,\beta,\beta^\prime}  c_{DG}\ltn \theta_{t_2}\omega\rtn_{\beta^\prime,0,T(\theta_{t_2}\omega)}T(\theta_{t_2}\omega)^{\beta^\prime}+ c_{DF} T(\theta_{t_2}\omega)\\
   & =&c_{\alpha,\beta,\beta^\prime}  c_{DG}\sup_{t_2\le s < t\le t_2+T(\theta_{t_2}\omega)}\frac{\|\omega(t)-\omega(s)\|}{(t-s)^{\beta^\prime}} T(\theta_{t_2}\omega)^{\beta^\prime}+ c_{DF} T(\theta_{t_2}\omega)\\
       &<&c_{\alpha,\beta,\beta^\prime}  c_{DG}\sup_{t_1\le s < t\le t_1+T(\theta_{t_1}\omega)}\frac{\|\omega(t)-\omega(s)\|}{(t-s)^{\beta^\prime}} T(\theta_{t_1}\omega)^{\beta^\prime}+ c_{DF} T(\theta_{t_1}\omega)\\
    &=&c_{\alpha,\beta,\beta^\prime}  c_{DG}\ltn \theta_{t_1}\omega\rtn_{\beta^\prime,0,T(\theta_{t_1}\omega)}T(\theta_{t_1}\omega)^{\beta^\prime}+  c_{DF}T(\theta_{t_1}\omega)= c_{DF} \mu,
\end{eqnarray*}
which is a contradiction, and therefore $t_1 + T(\theta_{t_1}\omega)\leq t_2 + T(\theta_{t_2}\omega)$. The second part of (ii) can be proved analogously.

Finally, iterating (\ref{ek}) and (\ref{ekb}) the chain of inequalities (\ref{eq33}) follows.
\end{proof}

\medskip

In order to check the condition {\bf(S2)} for the fractional Brownian motion, in what follows we would like to find a lower estimate of the number of stopping times that lie in any interval of length $t$. To do that, we first consider the number of stopping times $N(\omega)$ in the interval $(0,\mu]$ and establish an upper bound for it. Next, we will check that in any interval of length $\mu$ the number of stopping times coincides either with the number of stopping times in the interval $(0,\mu]$ or this number plus one. However, it will turn out that by choosing a fractional Brownian motion such that the corresponding trace of its covariance operator is sufficiently small, then on a interval of the type $(0,\mu m]$ the number of stopping times coincides with $m$ or $m+1$, which will be the key property to check {\bf(S2)}.
\medskip

\begin{lemma}\label{l1n}
Given $\omega\in \Omega$, let $N(\omega)$ denote the number of stopping times in $(0,\mu]$, i.e.

$$ N(\omega)= \max \{i \in \NN: \,T_{i}(\omega) \leq \mu \}.$$
Then we have
\begin{equation*}
 N(\omega)\leq \mu K(\omega, \mu),
\end{equation*}
with
\begin{align}\label{K}
K(\omega, \mu)=  \bigg( \frac{c_{\alpha,\beta,\beta^\prime} c_{DG} \ltn\omega\rtn_{\beta^{\prime\prime},0,\mu}+ c_{DF}\mu^{1-\beta^{\prime\prime}}}{c_{DF}\mu}\bigg)^{\frac{1}{\beta^{\prime \prime}}}.
\end{align}
Moreover,
\begin{align}\label{EK}
\limsup_{n \to\infty}\frac{1}{\mu n}\sum_{j=0}^{n-1} N(\theta_{j\mu}\omega)\le\EE\bigg(\sup_{r\in [-\mu,0]}K(\theta_r \omega, \mu)\bigg)=:\frac{1}{\mu d}
\end{align}
for all $\omega$ in  a $(\theta_t)_{t\in\RR}$ invariant set of full measure
where $d=d(\mu,{\rm tr}(Q)) \in (0,1]$.
\end{lemma}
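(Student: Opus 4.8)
The plan is to split the statement into two parts: the deterministic bound $N(\omega)\le \mu K(\omega,\mu)$ for a fixed path, and the averaged statement \eqref{EK}, which is an application of the ergodic theorem.

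For the first part, I would proceed by estimating from below the increments $T_{i+1}(\omega)-T_i(\omega)$ using the defining relation \eqref{stnew}. By {\bf(S1)} (equivalently \eqref{stnew} shifted by $\theta_{T_i\omega}$), each increment $\tau_i:=T_{i+1}(\omega)-T_i(\omega)$ satisfies
$c_{\alpha,\beta,\beta^\prime}c_{DG}\ltn\theta_{T_i(\omega)}\omega\rtn_{\beta^\prime,0,\tau_i}\tau_i^{\beta^\prime}+c_{DF}\tau_i=c_{DF}\mu$.
The idea is to bound the H\"older seminorm on $[0,\tau_i]$ of the shifted path in terms of the $\beta''$-H\"older seminorm of $\omega$ on the whole of $[0,\mu]$: since $\tau_i\le\mu$ and $T_i(\omega)+\tau_i\le\mu$ (as we only count the $T_i$ inside $(0,\mu]$), one has
$\ltn\theta_{T_i(\omega)}\omega\rtn_{\beta^\prime,0,\tau_i}\le \ltn\omega\rtn_{\beta^{\prime\prime},0,\mu}\,\tau_i^{\beta^{\prime\prime}-\beta^\prime}$.
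Plugging this in and multiplying through by $\tau_i^{-\beta^{\prime\prime}}$ (using $\tau_i\le\mu$ so that $\tau_i^{1-\beta^{\prime\prime}}\le\mu^{1-\beta^{\prime\prime}}$ for the drift term) yields
$c_{DF}\mu\,\tau_i^{-\beta^{\prime\prime}}\le c_{\alpha,\beta,\beta^\prime}c_{DG}\ltn\omega\rtn_{\beta^{\prime\prime},0,\mu}+c_{DF}\mu^{1-\beta^{\prime\prime}}$,
i.e. $\tau_i\ge K(\omega,\mu)^{-1}$ with $K$ as in \eqref{K}. Summing the first $N(\omega)$ increments gives $\mu\ge T_{N(\omega)}(\omega)=\sum_{i=0}^{N(\omega)-1}\tau_i\ge N(\omega)K(\omega,\mu)^{-1}$, which is the claimed bound.

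For the second part, I would first observe that the number of stopping times falling in an interval $(j\mu,(j+1)\mu]$ is at most $\sup_{r\in[-\mu,0]}K(\theta_r\omega,\mu)$ up to the factor $\mu$; more carefully, by the monotone-shift property of the stopping times (Lemma \ref{l1}(ii)-(iii)) the count in any window of length $\mu$ differs from $N$ of a suitably shifted path by at most one, and in any case is dominated by $\mu\sup_{r\in[-\mu,0]}K(\theta_r\omega,\mu)$. Since $(C_0(\RR;V),\bB,\PP,\theta)$ is an ergodic metric dynamical system and the map $\omega\mapsto\sup_{r\in[-\mu,0]}K(\theta_r\omega,\mu)$ is measurable (continuity of $r\mapsto\ltn\theta_r\omega\rtn_{\beta'',0,\mu}$, cf. Lemma \ref{l2}) and integrable — here one needs the Fernique-type moment bound for the $\beta''$-H\"older seminorm of fractional Brownian motion, which is where the trace $\mathrm{tr}(Q)$ enters and makes the expectation finite and controllable — Birkhoff's ergodic theorem applied to $\frac1n\sum_{j=0}^{n-1}N(\theta_{j\mu}\omega)$ (or the dominating sum) gives the $\PP$-a.s. limit $\EE(\sup_{r\in[-\mu,0]}K(\theta_r\omega,\mu))$ on a $(\theta_t)$-invariant full-measure set. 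Defining $d=d(\mu,\mathrm{tr}(Q))$ by $\tfrac1{\mu d}:=\EE(\sup_{r\in[-\mu,0]}K(\theta_r\omega,\mu))$, positivity $d>0$ is clear, and $d\le 1$ follows since $K(\omega,\mu)\ge \mu^{1-\beta''}/\mu\cdot(1)^{1/\beta''}$... more simply, from $N(\omega)\ge 1$ and $T_1\le\mu$, the increment bound forces $K(\omega,\mu)\ge 1/\mu$, hence $\EE(\sup_r K)\ge 1/\mu$, giving $d\le 1$.

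The main obstacle is the integrability claim in \eqref{EK}: one must know that $\EE\big(\sup_{r\in[-\mu,0]}\ltn\theta_r\omega\rtn_{\beta^{\prime\prime},0,\mu}^{1/\beta^{\prime\prime}}\big)<\infty$, which requires a Fernique/Garsia–Rodemich–Rumsey estimate for the H\"older seminorm of the $V$-valued fBm together with stationarity of the increments to move the supremum over $r$ inside a fixed moment — and it is precisely here that the smallness of $\mathrm{tr}(Q)$ will later be used (in the next lemmas) to make $d$ large enough to verify {\bf(S2)} and {\bf(S3)}. The monotonicity bookkeeping (that a length-$\mu$ window contains at most one more stopping time than $(0,\mu]$) is routine given Lemma \ref{l1}, so I would state it briefly and move on.
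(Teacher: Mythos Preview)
Your first part is correct and essentially identical to the paper's argument: derive a uniform lower bound $\tau_i\ge K(\omega,\mu)^{-1}$ for each increment inside $(0,\mu]$ by passing from the $\beta'$- to the $\beta''$-H\"older seminorm on $[0,\mu]$, then sum.

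For the second part you have the right ingredients (domination by $K$, integrability of the H\"older seminorm, ergodic theorem), but there is a gap in the way you invoke Birkhoff. You propose to apply the discrete-time ergodic theorem to $\frac{1}{n}\sum_{j=0}^{n-1}N(\theta_{j\mu}\omega)$ under the single map $\theta_\mu$. However, the ergodicity established in the paper's references (\cite{MasSchm04}, \cite{GS11}) is for the \emph{continuous-time} flow $(\theta_t)_{t\in\RR}$; ergodicity of the time-$\mu$ map is a separate statement you would need to justify. More to the point, the lemma requires convergence on a $(\theta_t)_{t\in\RR}$-invariant set of full measure, and discrete-time Birkhoff only hands you a $\theta_\mu$-invariant exceptional set.

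The paper sidesteps both issues with a short integral trick: since $K(\theta_{j\mu}\omega,\mu)\le \sup_{r\in[-\mu,0]}K(\theta_{r+q+j\mu}\omega,\mu)$ for every $q\in[0,\mu]$ (take $r=-q$), one gets
\[
N(\theta_{j\mu}\omega)\le \mu\, K(\theta_{j\mu}\omega,\mu)\le \int_0^\mu \sup_{r\in[-\mu,0]}K(\theta_{r+q+j\mu}\omega,\mu)\,dq,
\]
and summing over $j$ turns the discrete sum into the continuous integral $\int_0^{\mu n}\sup_{r\in[-\mu,0]}K(\theta_{r+q}\omega,\mu)\,dq$. Now the \emph{continuous-time} ergodic theorem applies directly and yields the limit $\EE\big(\sup_{r\in[-\mu,0]}K(\theta_r\omega,\mu)\big)$ on a $(\theta_t)_{t\in\RR}$-invariant set. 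The required integrability follows from $\sup_{r\in[-\mu,0]}\ltn\theta_r\omega\rtn_{\beta'',0,\mu}\le\ltn\omega\rtn_{\beta'',-\mu,\mu}$ together with standard moment bounds (the paper cites Kunita \cite{Kun90}, Theorem~1.4.1); Fernique would also work, but note that the supremum over $r$ is disposed of \emph{before} taking the expectation, not by stationarity inside the moment as you suggest.

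Two minor points. Your aside about counting stopping times in $(j\mu,(j+1)\mu]$ via Lemma~\ref{l1} belongs to Lemma~\ref{l2n}, not here; for \eqref{EK} you only need $N(\theta_{j\mu}\omega)\le\mu K(\theta_{j\mu}\omega,\mu)$, which is Part~1 applied to the shifted path $\theta_{j\mu}\omega$. And for $d\le1$, the direct route is simply $K(\omega,\mu)\ge \big(c_{DF}\mu^{1-\beta''}/(c_{DF}\mu)\big)^{1/\beta''}=1/\mu$ pointwise, hence $\EE(\sup_r K)\ge 1/\mu$.
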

\begin{proof}
As a direct consequence of the definition of the stopping time $T(\omega)$ we derive
\begin{align*}
    c_{DF} \mu&= c_{\alpha,\beta,\beta^\prime}  c_{DG}\sup_{0\le s<t\le T(\omega)}\frac{\|\omega(t)-\omega(s)\|}{(t-s)^{\beta^\prime}}T(\omega)^{\beta^\prime}+ c_{DF}T(\omega)\\
    &\le T(\omega)^{\beta^\prime} \bigg(c_{\alpha,\beta,\beta^\prime}  c_{DG}\sup_{0\le s<t\le T(\omega)}\frac{\|\omega(t)-\omega(s)\|}{(t-s)^{\beta^{\prime\prime}}}T(\omega)^{\beta^{\prime \prime}-\beta^\prime}+ c_{DF}T(\omega)^{1-\beta^\prime}\bigg)\\
    &= T(\omega)^{\beta^\prime} T(\omega)^{\beta^{\prime\prime}-\beta^\prime} \bigg(c_{\alpha,\beta,\beta^\prime}  c_{DG}\ltn \omega \rtn_{\beta^{\prime\prime}, 0,T(\omega)} + c_{DF}T(\omega)^{1-\beta^{\prime \prime}}\bigg)\\
    &\le T(\omega)^{\beta^{\prime\prime}} \bigg(c_{\alpha,\beta,\beta^\prime}  c_{DG}\ltn \omega \rtn_{\beta^{\prime\prime}, 0,\mu} +c_{DF} \mu^{1-\beta^{\prime \prime}}\bigg)
\end{align*}
and therefore,
\begin{equation*}
  T(\omega)\ge \bigg(\frac{c_{DF}\mu}{c_{\alpha,\beta,\beta^\prime}  c_{DG}\ltn \omega \rtn_{\beta^{\prime\prime}, 0,\mu} + c_{DF}\mu^{1-\beta^{\prime \prime}}}  \bigg)^{\frac{1}{\beta^{\prime \prime}}}.
\end{equation*}

Furthermore, the same inequality holds true if we replace $T(\omega)$ by $T(\theta_{T_{j}(\omega)} \omega)$, provided that $T(\theta_{T_{j}(\omega)} \omega)+T_{j}(\omega)=T_{j+1}(\omega)\leq \mu$, since then
\begin{align*}
  c_{DF}  \mu&=c_{\alpha,\beta,\beta^\prime}  c_{DG}\sup_{0\le s<t\le T(\theta_{T_{j}(\omega)}\omega)}\frac{\|\theta_{T_{j}(\omega)}\omega(t)-\theta_{ T_{j}(\omega)}\omega(s)\|}{(t-s)^{\beta^\prime}}T(\theta_{T_{j}(\omega)}\omega)^{\beta^\prime}+c_{DF}T(\theta_{T_{j}(\omega)}\omega)\\
    &\le c_{\alpha,\beta,\beta^\prime}  c_{DG}\sup_{T_{j}(\omega)\le s<t\le T_{j+1}(\omega)}\frac{\|\omega(t)-\omega(s)\|}{(t-s)^{\beta^{\prime\prime}}}T(\theta_{T_{j}(\omega)}\omega)^{\beta^{\prime\prime}}+c_{DF}T(\theta_{T_{j}(\omega)}\omega)\\
    &= T(\theta_{T_{j}(\omega)}\omega)^{\beta^{\prime\prime}}
(c_{\alpha,\beta,\beta^\prime}  c_{DG}\ltn\omega\rtn_{\beta^{\prime\prime},T_{j}(\omega),T_{j+1}(\omega)}+ c_{DF}T(\theta_{T_{j}(\omega)}\omega)^{1-\beta^{\prime \prime}})\\
&\le T(\theta_{T_{j}(\omega)}\omega)^{\beta^{\prime\prime}}
(c_{\alpha,\beta,\beta^\prime}  c_{DG}\ltn\omega\rtn_{\beta^{\prime\prime},0,\mu}+ c_{DF}\mu^{1-\beta^{\prime \prime}}),
\end{align*}
and hence we also have
\begin{equation*}
  T(\theta_{T_{j}(\omega)}\omega)\ge \bigg(\frac{c_{DF} \mu}{c_{\alpha,\beta,\beta^\prime} c_{DG}\ltn \omega \rtn_{\beta^{\prime\prime}, 0,\mu} + c_{DF}\mu^{1-\beta^{\prime \prime}}}  \bigg)^{\frac{1}{\beta^{\prime \prime}}},\; \text{ whenever  }\; T_{j+1}(\omega)\le \mu.
\end{equation*}
By definition, $N(\omega)$ is the largest natural number such that $T_{N(\omega)}(\omega)\leq \mu$ . Using Lemma \ref{l1}(a) we have
\begin{align*}
   \mu&\geq T_{N(\omega)}(\omega) =\sum_{j=0}^{N(\omega)-1}T(\theta_{T_{j}(\omega)}(\omega)\ge  N(\omega) \bigg(\frac{ c_{DF}\mu}{c_{\alpha,\beta,\beta^\prime} c_{DG}\ltn \omega \rtn_{\beta^{\prime\prime},0, \mu} + c_{DF}\mu^{1-\beta^{\prime \prime}}}  \bigg)^{\frac{1}{\beta^{\prime \prime}}}
\end{align*}
and consequently the estimate of $N$ follows.

On the other hand, note  that $\sup_{r\in[-\mu,0]}\ltn \theta_r\omega\rtn_{\beta^{\prime\prime},0,\mu}\le \ltn\omega\rtn_{\beta^{\prime\prime},-\mu,\mu}$ and by Kunita \cite{Kun90}, Theorem 1.4.1, the right hand side  of this inequality has finite moments of any order, hence $ \EE\bigg(\sup_{r\in [-\mu,0]}K(\theta_r \omega, \mu)\bigg)<\infty$. Indeed,
\[
N(\theta_{j\mu}\omega) \leq \mu K(\theta_{j\mu} \omega, \mu) \le \int_0^\mu \sup_{r\in[-\mu,0]} K( \theta_{r+q+j\mu}\omega,\mu ) dq ,
\]
and this together with (\ref{change}) imply
\begin{align*}
     \sum_{j=0}^{n-1} N(\theta_{j\mu}\omega)&\leq \sum_{j=0}^{n-1}\int_0^\mu \sup_{r\in[-\mu,0]} K( \theta_{r+q+j\mu}\omega,\mu ) dq=\int_0^{\mu n} \sup_{r\in[-\mu,0]} K( \theta_{r+q}\omega,\mu ) dq,
           \end{align*}
hence
\begin{align*}
\limsup_{n \to\infty}&\frac{1}{\mu n}\sum_{j=0}^{n-1} N(\theta_{j\mu}\omega)\le\lim_{n\to\infty}\frac{1}{\mu n}\int_0^{\mu n} \sup_{r\in[-\mu,0]} K( \theta_{r+q}\omega,\mu ) dq=\EE \bigg(\sup_{r\in [-\mu,0] }K(\theta_r \omega, \mu )\bigg).
 \end{align*}
By the ergodic theorem for continuous time, there is a $(\theta_t)_{t\in\RR}$ invariant set of full measure where this convergence takes place.

In particular, we deduce that
\begin{align*}
\frac{1}{d}=\mu \EE\bigg(\sup_{r\in [-\mu,0]} K(\theta_r \omega, \mu)\bigg)\geq \mu \EE \bigg( \frac{\mu^{1-\beta^{\prime\prime}}}{\mu}\bigg)^{\frac{1}{\beta^{\prime \prime}}}=1.
\end{align*}
\end{proof}

In the following result we analyze the number of stopping times in any interval of length $\mu$.

\begin{lemma}\label{l2n}
Let $\omega \in \Omega$, $j\in \NN$ and let $M_j(\omega)$ be the number of stopping times in the interval $(j\mu,(j+1)\mu]$. Then
$$ M_j(\omega)=N(\theta_{j \mu }\omega)+1_{\{M_j(\cdot)>N(\theta_{j\mu}\cdot)\}}(\omega)$$
i.e.
$$
M_j(\omega)\in \{   N(\theta_{j\mu}\omega),  N(\theta_{j\mu}\omega) +1 \}, $$
where $N$ has been introduced in Lemma \ref{l1n}.
\end{lemma}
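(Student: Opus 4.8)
The idea is to read the two sides of the claimed identity as counting the iterates of the same first-hitting map $\tau\mapsto\tau+T(\theta_\tau\omega)$, started from two nearby points, and then to invoke the interlacing property Lemma~\ref{l1}(iii).

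First I would fix the index $k\in\ZZ$ with $T_{k-1}(\omega)\le j\mu<T_k(\omega)$, i.e.\ $T_k(\omega)$ is the smallest stopping time exceeding $j\mu$. Such a $k$ exists because $(T_i(\omega))_{i\in\ZZ}$ is strictly increasing with $T_0(\omega)=0$ and $T_i(\omega)\to+\infty$ as $i\to+\infty$: each gap equals $T_{i+1}(\omega)-T_i(\omega)=T(\theta_{T_i(\omega)}\omega)$, which by Lemma~\ref{l3} is at most $\mu$, and revisiting the estimate in the proof of Lemma~\ref{l1n} with this observation shows that the gap is bounded below by a positive constant that stays away from $0$ as long as $T_i(\omega)$ ranges over a bounded set (using that $\omega$ is $\beta^{\prime\prime}$-Hölder on compacts). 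Since \eqref{stop} gives $T_k(\omega)=T_{k-1}(\omega)+T(\theta_{T_{k-1}(\omega)}\omega)$, the hypothesis of Lemma~\ref{l1}(iii) holds with $t_1=T_{k-1}(\omega)$ and $t_2=j\mu$. Combining the cocycle identity $T_{k-1}(\omega)+T_m(\theta_{T_{k-1}(\omega)}\omega)=T_{k-1+m}(\omega)$ (Lemma~\ref{l1}(i)) with the chain \eqref{eq33} then yields
\begin{equation}
T_{k-1+m}(\omega)\ \le\ j\mu+T_m(\theta_{j\mu}\omega)\ <\ T_{k+m}(\omega),\qquad m\ge1. \tag{$\star$}
\end{equation}

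Write $N:=N(\theta_{j\mu}\omega)$ and $M:=M_j(\omega)$. Note that $N\ge1$ (since $T_1(\theta_{j\mu}\omega)=T(\theta_{j\mu}\omega)\le\mu$ by Lemma~\ref{l3}) and $M\ge1$ (from $(\star)$ with $m=1$: $j\mu<T_k(\omega)\le j\mu+T(\theta_{j\mu}\omega)\le(j+1)\mu$). By construction the stopping times lying in $(j\mu,(j+1)\mu]$ are precisely $T_k(\omega),\dots,T_{k+M-1}(\omega)$, so $T_{k+M-1}(\omega)\le(j+1)\mu<T_{k+M}(\omega)$, while $T_N(\theta_{j\mu}\omega)\le\mu<T_{N+1}(\theta_{j\mu}\omega)$ by definition of $N$. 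For the lower bound $M\ge N$, take $m=N$ in $(\star)$: then $T_{k+N-1}(\omega)\le j\mu+T_N(\theta_{j\mu}\omega)\le(j+1)\mu$, and since $N\ge1$ also $T_{k+N-1}(\omega)\ge T_k(\omega)>j\mu$; hence $T_k(\omega),\dots,T_{k+N-1}(\omega)$ are $N$ stopping times inside $(j\mu,(j+1)\mu]$. For the upper bound $M\le N+1$, it suffices to treat $M\ge2$, in which case $m=M-1$ in $(\star)$ gives $j\mu+T_{M-1}(\theta_{j\mu}\omega)<T_{k+M-1}(\omega)\le(j+1)\mu$, so $T_{M-1}(\theta_{j\mu}\omega)<\mu$, forcing $M-1\le N$ by maximality of $N$. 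Therefore $N\le M\le N+1$, which is precisely $M_j(\omega)\in\{N(\theta_{j\mu}\omega),N(\theta_{j\mu}\omega)+1\}$; the indicator formulation is merely a rewriting of this, and in particular $M_j(\omega)$ is finite since $N(\theta_{j\mu}\omega)<\infty$ by Lemma~\ref{l1n}.

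The argument is mostly bookkeeping once $(\star)$ is in hand, so the genuinely delicate step is establishing $(\star)$ with the correct index alignment: one must combine Lemma~\ref{l1}(iii) (interlacing of the two iteration sequences) with the cocycle identity Lemma~\ref{l1}(i) to relabel the sequence started at $T_{k-1}(\omega)$ as $(T_{k-1+m}(\omega))_{m\ge1}$, and one must track which inequalities in \eqref{eq33} are strict, since it is precisely the strict inequality $T_{M-1}(\theta_{j\mu}\omega)<\mu$ (rather than $\le\mu$) that yields $M-1\le N$ in the upper bound. The auxiliary fact $T_i(\omega)\to\infty$, needed only so that the reference index $k$ exists, is a secondary point and follows from the gap estimate in Lemma~\ref{l1n} once one observes via Lemma~\ref{l3} that every gap $T(\theta_{T_i(\omega)}\omega)$ is at most $\mu$.
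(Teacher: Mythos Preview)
Your proof is correct and follows essentially the same approach as the paper: both fix the first index $k$ with $T_{k-1}(\omega)\le j\mu<T_k(\omega)$, apply Lemma~\ref{l1}(iii) with $t_1=T_{k-1}(\omega)$, $t_2=j\mu$ together with the cocycle identity Lemma~\ref{l1}(i) to obtain the interlacing relation $T_{k-1+m}(\omega)\le j\mu+T_m(\theta_{j\mu}\omega)<T_{k+m}(\omega)$ (your $(\star)$, the paper's \eqref{schachtel}), and then read off the two inequalities by choosing suitable values of $m$. The only cosmetic difference is that for the bound $M\ge N$ you plug in $m=N$ to exhibit $N$ stopping times in the interval, whereas the paper plugs in $\ell=M+1$ to show $T_{M+1}(\theta_{j\mu}\omega)>\mu$; both are immediate from $(\star)$.
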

\begin{proof}
Consider $j\in \NN$ and $T_k(\omega) \in(j\mu,(j+1)\mu]$. Assume that $k_j(\omega)$ is the smallest natural number such that  $T_{k_j}(\omega)> j\mu$, hence
$$T_{k_j(\omega)-1}(\omega) \leq j\mu <T_{k_j(\omega)}(\omega).$$
Note that by definition of $k_j$ we have
$$k_j(\omega)=\sum_{i=0}^{j-1}M_i(\omega)+1.$$
Applying \eqref{eq33} with $t_1=T_{k_j(\omega)-1}(\omega)$ and $t_2=j\mu$, so that
\begin{align*}
& t_1+T(\theta_{t_1} \omega)=T_{k_j(\omega)-1}(\omega)+ {T}(\theta_{T_{k_j(\omega)-1}}\omega)=T_{k_j(\omega)}(\omega), \\
& t_2 + T(\theta_{t_2}\omega)=j \mu + T(\theta_{j \mu }\omega) = j \mu +  T_{1}(\theta_{j \mu }\omega), \\
& t_1+T(\theta_{t_1} \omega) +T (\theta_{t_1 +  T(\theta_{t_1} \omega)}\omega)= T_{k_j(\omega)+1}(\omega), \\
& t_2 +  T(\theta_{t_2}\omega)  + T (\theta_{t_2 +  T(\theta_{t_2} \omega)}\omega)   =j \mu + T_{2}(\theta_{j \mu }\omega),
\end{align*}
we obtain
\begin{eqnarray} \label{chain_stop}
 T_{k_j(\omega)-1}(\omega)  \leq  j \mu  <T_{k_j(\omega)}(\omega) \leq j\mu+T_{1}(\theta_{j\mu}\omega)  <  T_{k_j(\omega)+1}(\omega) \leq   j\mu+T_{2}(\theta_{j\mu}\omega).
\end{eqnarray}


Iterating  \eqref{chain_stop} gives
\begin{align} \label{schachtel}     T_{k_j(\omega)+\ell-1}(\omega) \leq j\mu+T_{\ell}(\theta_{j\mu}\omega)  <  T_{k_j(\omega)+\ell}(\omega), \qquad \ell =0,1,2 \ldots . \end{align}
Recall that $T_{k_j(\omega)}(\omega)$ is the first stopping time in $(j\mu, (j+1) \mu]$, and, by definition of $M_j(\omega)$, the stopping time
$T_{k_j(\omega)+M_j(\omega)-1}(\omega)$ is the last one, i.e.
\begin{align} T_{k_{j+1}(\omega)}(\omega) = T_{k_j(\omega)+M_j(\omega)}  (\omega) > (j+1) \mu \geq T_{
{k_{j+1}(\omega)}-1}(\omega)= T_{k_j(\omega)+M_j(\omega)-1}(\omega). \label{schachtel_2} \end{align}

So it follows from \eqref{schachtel} that
\begin{align*}     j \mu +  T_{M_{j}(\omega)-1}(\theta_{j\mu}\omega) <   T_{k_j(\omega)+M_j(\omega)-1}  (\omega) \leq (j+1) \mu .\end{align*}
Therefore we have
\begin{align*}      T_{M_{j}(\omega)-1}(\theta_{j\mu}\omega)  <  \mu \end{align*}
and so
$$  N(\theta_{j\mu}\omega) \geq M_j(\omega)-1. $$
On the other hand, we obtain also from  \eqref{schachtel} and \eqref{schachtel_2} that
$$  (j+1) \mu <  T_{k_j(\omega)+M_j(\omega)}(\omega) \leq j\mu+T_{M_{j}(\omega)+1}(\theta_{j\mu}\omega),$$ which implies
$$ T_{M_{j}(\omega)+1}(\theta_{j\mu}\omega) >\mu $$
and so
$$  N(\theta_{j\mu}\omega) \leq M_j(\omega). $$

\end{proof}

\smallskip

\begin{lemma}\label{le1}
Let $\mu$ given as in {\bf(S1)}. For any small $\delta>0$ such that ${\rm tr}(Q)<\delta$ there exist a $(\theta_t)_{t\in \RR}$-invariant set $\Omega^\prime\in \fF$ of full measure and $\bar d=\bar d(\mu,{\rm tr}(Q))$, such that for $\omega\in\Omega^\prime$,
\begin{equation*}
\limsup_{n\to\infty}\frac{1}{n}\sum_{j=0}^{n-1} 1_{\{M_j(\cdot)>N(\theta_{j\mu}\cdot)\}}(\omega)\le \bar d.
\end{equation*}
In particular, for fixed $\mu$, for any $\varepsilon >0$ there exists $\delta>0$ with ${\rm tr}(Q)<\delta$, such that for $\omega \in \Omega^\prime$
\begin{equation*}
\limsup_{n\to\infty}\frac{1}{n}\sum_{j=0}^{n-1} 1_{\{M_j(\cdot)>N(\theta_{j\mu}\cdot)\}}(\omega)\le \varepsilon.
\end{equation*}
\end{lemma}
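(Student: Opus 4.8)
The plan is to show that the event $\{M_j(\cdot) > N(\theta_{j\mu}\cdot)\}$ can be controlled by a single ``large displacement'' event for the shifted path, and then to apply the ergodic theorem to the corresponding indicator. Recall from Lemma~\ref{l2n} that $M_j(\omega) \in \{N(\theta_{j\mu}\omega), N(\theta_{j\mu}\omega)+1\}$, so the indicator $1_{\{M_j(\cdot)>N(\theta_{j\mu}\cdot)\}}$ is exactly $1$ when an extra stopping time is squeezed into $(j\mu,(j+1)\mu]$ beyond the $N(\theta_{j\mu}\omega)$ guaranteed by the analysis in Lemma~\ref{l1n}. First I would translate this event via the shift: using the cocycle property of the $T_i$ (Lemma~\ref{l1}(i)) and the identity $\sup_{r\in[-\mu,0]}\ltn\theta_r\omega\rtn_{\beta^{\prime\prime},0,\mu}\le\ltn\omega\rtn_{\beta^{\prime\prime},-\mu,\mu}$, the occurrence of the extra stopping time forces the number of stopping times of $\theta_{j\mu}\omega$ in $(0,\mu]$, call it $N(\theta_{j\mu}\omega)$, to be strictly smaller than the actual count $M_j(\omega)$ in the window; the gap between $N$ and its a.e.-finite upper bound $\mu K(\theta_{j\mu}\omega,\mu)$ from \eqref{K} is where the smallness of ${\rm tr}(Q)$ enters. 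Concretely I would bound
$$
1_{\{M_j(\cdot)>N(\theta_{j\mu}\cdot)\}}(\omega) \le 1_{\{\mu K(\theta_{j\mu}\omega,\mu) \ge N(\theta_{j\mu}\omega)+1\}} \le 1_{\{K(\theta_{j\mu}\omega,\mu) \ge 1/\mu\}}\cdot(\text{something}),
$$
but more usefully I would compare $K(\theta_{j\mu}\omega,\mu)$ directly against the deterministic lower bound $K\ge(\mu^{1-\beta''}/\mu)^{1/\beta''}=1$ obtained in the last display of Lemma~\ref{l1n}, so that an extra stopping time can appear only when $K(\theta_{j\mu}\omega,\mu)$ exceeds $1$ by a definite margin, i.e. only when $\ltn\theta_{j\mu}\omega\rtn_{\beta^{\prime\prime},0,\mu}$ (equivalently $\ltn\omega\rtn_{\beta^{\prime\prime},j\mu-\mu,(j+1)\mu}$) is bounded below by a constant $\kappa_0=\kappa_0(\mu)>0$.

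Granting such a bound $1_{\{M_j(\cdot)>N(\theta_{j\mu}\cdot)\}}(\omega)\le 1_{\{\ltn\omega\rtn_{\beta^{\prime\prime},-\mu,\mu}\ge\kappa_0\}}(\theta_{j\mu}\omega)$, the Cesàro averages on the left are dominated by $\frac1n\sum_{j=0}^{n-1}1_{A}(\theta_{j\mu}\omega)$ with $A=\{\ltn\omega\rtn_{\beta^{\prime\prime},-\mu,\mu}\ge\kappa_0\}$. Since $(C_0(\RR;V),\bB,\PP,\theta)$ is an ergodic metric dynamical system, Birkhoff's ergodic theorem applied to the discrete-time map $\theta_\mu$ gives, on a $(\theta_t)_{t\in\RR}$-invariant set $\Omega'$ of full measure,
$$
\limsup_{n\to\infty}\frac1n\sum_{j=0}^{n-1}1_{\{M_j(\cdot)>N(\theta_{j\mu}\cdot)\}}(\omega) \le \lim_{n\to\infty}\frac1n\sum_{j=0}^{n-1}1_A(\theta_{j\mu}\omega)=\PP(A)=:\bar d(\mu,{\rm tr}(Q)).
$$
(One must check $\theta_\mu$-ergodicity, or simply invoke the continuous-time ergodic theorem exactly as in the proof of Lemma~\ref{l1n}, averaging $\int_0^{\mu n}1_A(\theta_q\omega)\,dq$; this avoids the ergodicity-under-$\theta_\mu$ subtlety.) This establishes the first assertion with $\bar d=\PP(A)$.

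For the second assertion, I would show $\PP(A)=\PP(\ltn\omega\rtn_{\beta^{\prime\prime},-\mu,\mu}\ge\kappa_0)\to 0$ as ${\rm tr}(Q)\to 0$. The key point is that $\kappa_0=\kappa_0(\mu)$ is a fixed threshold independent of ${\rm tr}(Q)$ once $\mu$ is fixed, whereas the law of $\ltn B^H\rtn_{\beta^{\prime\prime},-\mu,\mu}$ scales with the noise amplitude: writing $B^H=\sum_i\sqrt{q_i}\beta_i^H e_i$, one has (by Fernique's theorem together with the moment bounds of Kunita \cite{Kun90}, Theorem~1.4.1, already cited above) $\EE\,\ltn B^H\rtn_{\beta^{\prime\prime},-\mu,\mu}\le C(\mu)\,{\rm tr}(Q)^{1/2}$, and hence by Markov's inequality $\PP(A)\le C(\mu){\rm tr}(Q)^{1/2}/\kappa_0(\mu)$. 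Given $\varepsilon>0$ and fixed $\mu$, choosing $\delta>0$ so small that $C(\mu)\delta^{1/2}/\kappa_0(\mu)\le\varepsilon$ yields, whenever ${\rm tr}(Q)<\delta$,
$$
\limsup_{n\to\infty}\frac1n\sum_{j=0}^{n-1}1_{\{M_j(\cdot)>N(\theta_{j\mu}\cdot)\}}(\omega)\le\varepsilon
$$
on the full-measure invariant set $\Omega'$. The main obstacle I anticipate is the first step: making rigorous the claim that an extra stopping time in a window of length $\mu$ forces $\ltn\omega\rtn_{\beta^{\prime\prime}}$ over (a slight enlargement of) that window to exceed a fixed positive threshold. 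This requires carefully exploiting the defining equation \eqref{stnew} for consecutive stopping times together with the chain \eqref{schachtel} to convert ``$M_j>N(\theta_{j\mu}\cdot)$'' into a quantitative statement about the Hölder seminorm; the bookkeeping with the $\beta'$ versus $\beta''$ seminorms and the factor $K$ in \eqref{K} is where the argument is most delicate, but it is essentially a refinement of the estimates already carried out in Lemma~\ref{l1n}.
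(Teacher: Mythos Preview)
Your first step---the claim that $\{M_j(\omega)>N(\theta_{j\mu}\omega)\}$ forces $\ltn\theta_{j\mu}\omega\rtn_{\beta'',0,\mu}$ (or the seminorm on any fixed enlargement of that window) to exceed a fixed threshold $\kappa_0(\mu)>0$---is not merely delicate; it is false, and this is where the proposal breaks down. Consider a path whose $\beta''$-H\"older seminorm on every window of length $2\mu$ is uniformly tiny, say at most $\eta$. Then by \eqref{stnew} each gap $T_{i+1}(\omega)-T_i(\omega)$ lies in $(\mu-\epsilon(\eta),\mu)$ with $\epsilon(\eta)\to 0$ as $\eta\to 0$, and similarly $N(\theta_{j\mu}\omega)=1$ for every $j$. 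But the stopping times $T_i(\omega)$ drift slowly relative to the fixed grid $\mu\ZZ$: after roughly $\mu/\epsilon(\eta)$ steps the accumulated lag exceeds $\mu$, and the corresponding interval $(j\mu,(j+1)\mu]$ contains two stopping times, i.e.\ $M_j(\omega)=2>1=N(\theta_{j\mu}\omega)$. For that $j$ the local seminorm is still at most $\eta$, which can be made smaller than any prescribed $\kappa_0$. The event $\{M_j>N(\theta_{j\mu}\cdot)\}$ is therefore a \emph{phase} phenomenon (alignment of the random stopping-time grid with the deterministic $\mu$-grid), not a ``large noise'' event, and cannot be dominated pointwise by an indicator $1_A(\theta_{j\mu}\omega)$ with $\PP(A)\to 0$.

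This is precisely why the paper's argument is organised differently. It groups the indices into blocks of length $m$ and shows that, on a set $A$ of paths whose $\beta'$-seminorm on every length-$\mu$ subwindow of $[-\mu m,\mu m]$ is below a suitable constant $c$, the interval $(im\mu,(i+1)m\mu]$ contains either $m$ or $m+1$ stopping times; hence $\sum_{i'=0}^{m-1}1_{\{M_{im+i'}>N(\theta_{(im+i')\mu}\cdot)\}}\le 1$ on that good event. Dominating the block average by a stationary majorant $Y^m(\theta_{im\mu}\omega)$ that equals $1/m$ on the good set and $1$ on its complement, and applying the continuous-time ergodic theorem, yields $\bar d\le 1/m+\PP(A^c)$. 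One then first fixes $m$ large so that $1/m<\varepsilon/2$, and afterwards chooses ${\rm tr}(Q)$ small so that $\PP(A^c)<\varepsilon/2$. The two-scale (block) structure is essential: it converts the rare-but-unavoidable phase slips into a density bound $1/m$, which your single-window pointwise comparison cannot do.
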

\begin{proof}
Let $\varepsilon>0$ be given. Choose $m>2,\, m\in\NN$ large enough such that
\begin{equation*}
  \frac{1}{m}<\frac{\varepsilon}{2},
\end{equation*}
and define
\begin{align}\label{eq9}
\begin{split}
A&=\{\omega\in\Omega:  \sup_{\rho\in [-\mu m,\mu(m-1)]}\ltn\omega\rtn_{\beta^\prime,\rho,\rho+\mu}<c\}\in\fF,\\
B&=\{\omega\in\Omega:  \sup_{\rho\in [0,\mu(m-1)]}\ltn\omega\rtn_{\beta^\prime,\rho,\rho+\mu}<c\}\in\fF.
\end{split}
\end{align}
We choose ${\rm tr}(Q)$ small enough such that the set of trajectories $A$ fulfills $\PP(A)>1-\varepsilon/2$. In $A$ and $B$ the constant $c$ is chosen such that the solution $\tau$ of the following equation related to \eqref{stnew}
\begin{equation*}
c_{\alpha,\beta,\beta^\prime} c_{DG} c\tau^{\beta^\prime}+c_{DF}\tau=c_{DF}\mu
\end{equation*}
is larger than $\mu-\frac{\mu}{2(m+1)}$.

\smallskip
For $i\in\ZZ^+$ define
\begin{align*}
  s_1(i,\omega)&=\sum_{i^\prime=0}^{m-1} 1_{\{M_{i m+i^\prime}(\cdot)>N(\theta_{(i m+i^\prime)\mu}\cdot)\}}(\omega),\\
  s_2(\omega)&=\sum_{i^\prime =0}^{m-1} 1_{\{M_{i^\prime} (\cdot)>N(\theta_{i^\prime \mu}\cdot)\}}(\omega).
\end{align*}
The sequence $s_1(i,\omega)$ is not stationary, thus we are going to use the sequence given by $s_2(\theta_i\omega)$ to control the nonstationarity of $s_1$.

For $i\in\ZZ^+$, suppose that $\theta_{i m\mu}\omega\in A$ then $\theta_{i m\mu}\omega\in B$. Let
\begin{equation}\label{eq25}
  \{T_k(\omega), k\in\{\check k,\cdots,\hat k\}\}
\end{equation}
be the nonempty set of all stopping times in $(i m\mu, i(m+1)\mu]$. Note that
\begin{equation*}
  \ltn\omega\rtn_{\beta^\prime,\rho+i m \mu,\rho+ i(m+1)\mu}=\ltn\theta_{im\mu}\omega\rtn_{\beta^\prime,\rho,\rho+\mu}<c,\quad
  \rho\in [0,\mu(m-1)].
\end{equation*}
We claim that the number of stopping times in $(i m\mu, i(m+1)\mu]$ is either $m$ or $m+1$. Assume we have $m+2$ or more elements in \eqref{eq25}. Then the sum of the $m+1$ distances between
neighboring stopping times exceeds $m\mu$ which is a contradiction because we only consider stopping times in $(i m\mu, i(m+1)\mu]$.
We have that
\begin{equation*}
  M_{im+i^\prime}(\omega)\ge N(\theta_{(im+i^\prime)\mu}\omega)=1\quad\text{for }i^\prime=0,\cdots,m-1.
\end{equation*}
Indeed the number of stopping times forming the number $N(\theta_{(im+i^\prime)\mu}\omega)$ is between $\mu-\mu/(2(m+1)$ and $\mu$ such that there is exactly one in those intervals $((im+i^\prime)\mu,(im+i^\prime+1)\mu]$. According to Lemma \ref{l2n} we obtain that $M_{im+i^\prime}(\omega)\in\{1,2\}$ and there can only be one of those terms which is 2, therefore
\begin{align*}
  s_1(i,\omega)\in \{0,1\}.
\end{align*}
Let
\begin{equation*}
  \{T_{i^\prime}(\theta_{i m \mu}\omega), i^\prime\in\{1,\cdots, \hat i\}\}
\end{equation*}
be the nonempty set of all stopping times  in $(0, m\mu]$.
By the same arguments
\begin{equation*}
  s_2(\theta_{im\mu}\omega)\in \{0,1\},
\end{equation*}
which means that $s_1(\omega),\,s_2(\theta_{im\mu}\omega)$ do not have  more than one term equal to one.
Then by the comparison principle of Lemma \ref{l1}
\begin{align*}
im\mu< T_{\check k}(\omega)<& im\mu +T_1(\theta_{i m\mu}\omega)< T_{\check k+1}(\omega)< i m\mu+ T_2(\theta_{i m\mu}\omega)< \cdots.
\end{align*}
Hence the number of stopping times $T_{i^\prime}(\theta_{im\mu}\omega)$ in $(0, m\mu]$ is smaller than or equal to the number of stopping times $T_k(\omega)\in
(i m\mu, i(m+1)\mu]$ which is in $\{m,m+1\}$. Thus $s_1(i,\omega)\ge s_2(\theta_{im\mu}\omega)$.

Define
\begin{equation*}
  Y^m(\omega)=
  \left\{\begin{array}{lll}1&:& \omega\in B^c,\\
  1/m&:&\omega\in B.
  \end{array}\right.
\end{equation*}

In particular we have
\begin{equation*}
 \frac{1}{m} s_2(\theta_{im\mu}\omega)\le Y^m(\theta_{i m\mu}\omega)\quad \text{for }\omega\in\Omega.
\end{equation*}
If $s_1(i,\omega)\not=s_2(\theta_{im\mu}\omega)$ for $\theta_{i\mu}\omega\in B$ then $s_2(\theta_{im\mu}\omega)=0$ and $s_1(i,\omega)=1$ thus
\begin{equation*}
  \frac{1}{m}\sum_{i^\prime=0}^{m-1} 1_{\{M_{i m+i^\prime}(\cdot)>N(\theta_{(i m+i^\prime)\mu}\cdot)\}}(\omega)=\frac{1}{m}s_1(i,\omega)\le Y^m(\theta_{i m\mu}\omega)\quad \text{for }\omega\in\Omega.
\end{equation*}

Without loss of generality we can assume that $n=n^\prime m$ for $n^\prime\in\NN$. Then

\begin{align*}
\limsup_{n\to\infty} & \frac{1}{n}\sum_{j=0}^{n-1} 1_{\{M_j(\cdot)>N(\theta_{j\mu}\cdot)\}}(\omega)\\
=\limsup_{n^\prime\to\infty} & \frac{1}{n^\prime}\sum_{i=0}^{n^\prime-1}\frac{1}{m}\sum_{i^\prime=0}^{m-1} 1_{\{M_{im+i^\prime}(\cdot)>N(\theta_{(im+i^\prime)\mu}\cdot)\}}(\omega)\\
\le \limsup_{n^\prime\to\infty} & \frac{1}{n^\prime}\sum_{i=0}^{n^\prime-1}Y^m(\theta_{i m \mu}\omega).
\end{align*}

However
\begin{equation*}
  Y^m(\omega)\le \frac{1}{\mu m}\int^{\mu m}_0\sup_{r\in[-\mu m,0]}Y^m(\theta_{r+q}\omega)dq,
\end{equation*}
such that
\begin{equation*}
\frac{1}{n^\prime}\sum_{i=0}^{n^\prime-1}Y^m(\theta_{i m \mu}\omega)\leq\frac{1}{n^\prime\mu m}\int^{n^\prime\mu m}_0\sup_{r\in[-\mu m,0]}Y^m(\theta_{r+q}\omega)dq.
\end{equation*}
Finally,
\begin{align}\label{nl}
\begin{split}
 \lim_{t\to\pm\infty} &\frac{1}{t}\int_0^t\sup_{r\in[-\mu m,0]}Y^m(\theta_{r+q}\omega)dq=\EE\sup_{r\in[-\mu m,0]}Y^m(\theta_{r}\omega)=:\bar d,
 \end{split}
\end{align}
where this convergence holds true on a $(\theta_t )_{t\in \RR}$ invariant set of full measure. $\bar d$ depends on $\mu,\,m$ and ${\rm tr}(Q)$, that is, $\bar d=\bar d(\mu,m,{\rm tr}(Q))$. As we have seen above, for fixed $\mu$ we can make $\bar d$ arbitrary small if
we choose $m$ sufficiently large and ${\rm tr}(Q)$ sufficiently small. Then we have
\begin{equation*}
 \bar d=\EE 1_A(\omega)\sup_{r\in[-\mu m,0]}Y^m(\theta_{r}\omega)+\EE 1_{A^c}(\omega)\sup_{r\in[-\mu m,0]}Y^m(\theta_{r}\omega)\le \frac{1}{m}+\frac{\varepsilon}{2}<\varepsilon,
\end{equation*}
Indeed, for $\omega \in A$ the path $\theta_r \omega \in B$ for $r\in [-\mu m,0]$.

\end{proof}

\begin{theorem}\label{t3}
Under the assumptions of Lemma \ref{le1}, the sequence $(T_k(\omega))_{k\in \ZZ}$ of stopping times is such that
\begin{equation*}
\liminf_{k\to \infty}\frac{T_{k}(\omega)}{k} \geq D,
\end{equation*}
on a $(\theta_t)_{t\in \RR}$-invariant set of full measure, where
\begin{equation*}
  D=D(\mu,{\rm tr}(Q))= D= \frac{ \mu d}{ 1+d\bar d}\in (0,\mu]
\end{equation*}
such that
\begin{align} \label{eq7}
\lim_{\mu\to 0}D(\mu,{\rm tr}(Q))=0,\qquad  \lim_{{\rm tr}(Q)\to 0}D(\mu,{\rm tr}(Q))=\mu.
\end{align}
\end{theorem}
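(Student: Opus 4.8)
The plan is to count the stopping times block by block over intervals of length $\mu$ and then invert the resulting counting function. For $n\in\NN$ let $\nu_n(\omega)$ denote the number of stopping times lying in $(0,n\mu]$. Since the intervals $(j\mu,(j+1)\mu]$, $j=0,\dots,n-1$, tile $(0,n\mu]$ and $M_j(\omega)$ counts the stopping times in the $j$-th of them, $\nu_n(\omega)=\sum_{j=0}^{n-1}M_j(\omega)$. Inserting the decomposition $M_j(\omega)=N(\theta_{j\mu}\omega)+1_{\{M_j(\cdot)>N(\theta_{j\mu}\cdot)\}}(\omega)$ of Lemma \ref{l2n}, dividing by $n$ and letting $n\to\infty$, Lemma \ref{l1n} (which gives $\limsup_n \tfrac1n\sum_{j=0}^{n-1}N(\theta_{j\mu}\omega)\le 1/d$) and Lemma \ref{le1} (which gives $\limsup_n\tfrac1n\sum_{j=0}^{n-1}1_{\{M_j(\cdot)>N(\theta_{j\mu}\cdot)\}}(\omega)\le\bar d$) together yield, on the $(\theta_t)_{t\in\RR}$-invariant full-measure set obtained by intersecting the two invariant sets from those lemmas,
\begin{equation*}
\limsup_{n\to\infty}\frac{\nu_n(\omega)}{n}\le\frac1d+\bar d=\frac{1+d\bar d}{d}=\frac{\mu}{D}.
\end{equation*}

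Next I would convert this density bound into the claimed $\liminf$. First observe $T_k(\omega)\to\infty$, and hence $\nu_n(\omega)\to\infty$: by Lemmas \ref{l1n} and \ref{l2n} each interval of length $\mu$, and thus every bounded interval, contains only finitely many stopping times, while $(T_k(\omega))_{k\in\NN}$ is strictly increasing. Now fix $\omega$ in the good set and $\varepsilon>0$, pick $N_0$ with $\nu_n(\omega)\le(\mu/D+\varepsilon)n$ for $n\ge N_0$, and for $k\in\NN$ set $n(k):=\min\{m\in\NN:\nu_m(\omega)\ge k\}$. Then $n(k)\to\infty$ as $k\to\infty$; for $k$ large, $k\le\nu_{n(k)}(\omega)\le(\mu/D+\varepsilon)n(k)$ gives $n(k)\ge k/(\mu/D+\varepsilon)$, while $\nu_{n(k)-1}(\omega)<k$ means fewer than $k$ stopping times lie in $(0,(n(k)-1)\mu]$, so $T_k(\omega)>(n(k)-1)\mu$. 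Dividing, $T_k(\omega)/k>\mu/(\mu/D+\varepsilon)-\mu/k$, and letting $k\to\infty$ and then $\varepsilon\downarrow0$ gives $\liminf_{k\to\infty}T_k(\omega)/k\ge D$. Finally $0<D=\mu d/(1+d\bar d)\le\mu d\le\mu$ since $d\in(0,1]$ and $\bar d\ge0$, so $D\in(0,\mu]$.

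It then remains to verify \eqref{eq7}. Because $0<D\le\mu$, $\lim_{\mu\to0}D(\mu,{\rm tr}(Q))=0$ is immediate. For $\lim_{{\rm tr}(Q)\to0}D(\mu,{\rm tr}(Q))=\mu$ with $\mu$ fixed, Lemma \ref{le1} already provides $\bar d(\mu,{\rm tr}(Q))\to0$, so it suffices to show $d(\mu,{\rm tr}(Q))\to1$. From \eqref{K} and \eqref{EK},
\begin{equation*}
\frac1d=\EE\bigg(1+\frac{c_{\alpha,\beta,\beta^\prime}c_{DG}}{c_{DF}\mu^{1-\beta^{\prime\prime}}}\sup_{r\in[-\mu,0]}\ltn\theta_r\omega\rtn_{\beta^{\prime\prime},0,\mu}\bigg)^{1/\beta^{\prime\prime}},
\end{equation*}
and $\sup_{r\in[-\mu,0]}\ltn\theta_r\omega\rtn_{\beta^{\prime\prime},0,\mu}\le\ltn\omega\rtn_{\beta^{\prime\prime},-\mu,\mu}$ (as noted in the proof of Lemma \ref{l1n}). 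Since $\omega=B^H$ is Gaussian with $\EE\|B^H(t)-B^H(s)\|^2={\rm tr}(Q)\,|t-s|^{2H}$, Gaussian hypercontractivity together with a Garsia--Rodemich--Rumsey/Kolmogorov estimate on $[-\mu,\mu]$ (valid since $\beta^{\prime\prime}<H$, cf. the moment bound of Kunita used in Lemma \ref{l1n}) give $\EE\ltn B^H\rtn_{\beta^{\prime\prime},-\mu,\mu}^{p}\le C({\rm tr}(Q))^{p/2}$ for all $p\ge1$; hence the integrand above converges to $1$ in probability as ${\rm tr}(Q)\to0$ and, being bounded in $L^{2}$, is uniformly integrable, so $1/d\to1$. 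As $K(\theta_r\omega,\mu)\ge\mu^{-1}$ always forces $1/d\ge1$, this is a genuine limit, whence $d\to1$ and $D=\mu d/(1+d\bar d)\to\mu$.

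The delicate point is the second step, passing from the one-sided asymptotic density bound on $\nu_n$ to the pathwise $\liminf$ for $T_k$: one must keep every inequality pointing the right way and handle the elementary but fiddly bookkeeping of which length-$\mu$ block a given $T_k$ falls into. The first step is a direct assembly of Lemmas \ref{l1n}, \ref{l2n} and \ref{le1}, and the limit $d\to1$, although it uses the scaling of $B^H$ in ${\rm tr}(Q)$ and a uniform-integrability argument, is routine.
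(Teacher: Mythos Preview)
Your proof is correct and follows essentially the same route as the paper: both count stopping times block by block via $\nu_n=\sum_{j=0}^{n-1}M_j$, decompose $M_j$ through Lemma~\ref{l2n}, feed in the $\limsup$ bounds of Lemmas~\ref{l1n} and~\ref{le1}, and then invert the resulting density estimate to obtain $\liminf_{k\to\infty}T_k/k\ge D$. Your inversion via $n(k)=\min\{m:\nu_m\ge k\}$ and the explicit $\varepsilon$-argument is just a more detailed write-up of the paper's sandwich $\sum_{j=0}^{n-1}M_j<k\le\sum_{j=0}^{n}M_j$, and your justification that $d\to1$ as ${\rm tr}(Q)\to0$ (via the scaling $\EE\ltn B^H\rtn_{\beta'',-\mu,\mu}^p\le C({\rm tr}(Q))^{p/2}$ and uniform integrability) fills in a step the paper leaves implicit.
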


\begin{proof}

Given $k\in \NN$ we can find an $n=n(k,\omega)\in \NN$ such that
\[
    \sum_{j=0}^{n-1}M_{j}(\omega)<  k\leq \sum_{j=0}^{n}M_{j}(\omega),
\]
where $M_{j}(\omega)$ has been defined in Lemma \ref{l2n}. Then, due to the order of the stopping times  we have
$$T_{k}(\omega) > T_{\sum_{j=0}^{n-1} M_{j}(\omega)}(\omega),$$
and since \[
    \frac{1}{\sum_{j=0}^{n}M_{j}(\omega)}\leq \frac{1}{k} < \frac{1}{\sum_{j=0}^{n-1}M_{j}(\omega)}
\]
we obtain
\begin{align}\label{eq4b}
\begin{split}
 \liminf_{k\to \infty}\frac{T_{k}(\omega)}{k}&\geq \liminf_{n \to \infty}\frac{T_{\sum_{j=0}^{n-1} M_{j}(\omega)}(\omega)}{\sum_{j=0}^{n} M_{j}(\omega)} \\
     &\ge
    \liminf_{n \to\infty}\frac{n \mu}{\sum_{j=0}^{n} N(\theta_{j\mu}\omega)+\sum_{j=0}^n 1_{\{M_j(\cdot)> N(\theta_{j\mu}\cdot)\}}(\omega)}
    \end{split}
\end{align}
where this last inequality is derived from the fact that $T_{\sum_{j=0}^{n-1} M_{j}(\omega)}(\omega)$ is the first stopping time in the interval $(n\mu,(n+1)\mu]$ and Lemma \ref{l2n}.

Now, on account of \eqref{EK} and Lemma \ref{le1}, we obtain
\begin{align*}
 \limsup_{n \to\infty}&\frac{\sum_{j=0}^{n} N(\theta_{j\mu}\omega)+\sum_{j=0}^{n} 1_{\{M_j(\cdot)> N(\theta_{j\mu}\cdot)\}}(\omega)}{n \mu }\\
&=\frac{1}{\mu}  \limsup_{n \to\infty}\frac{\sum_{j=0}^{n} N(\theta_{j\mu}\omega)+\sum_{j=0}^{n} 1_{\{M_j(\cdot)> N(\theta_{j\mu}\cdot)\}}(\omega)}{n+1} \leq \frac{1}{ \mu d}+\frac{1}{\mu} {\bar d},
\end{align*}
on a $(\theta_t)_{t\in \RR}$--invariant set of full measure.
We choose
\begin{equation}\label{D}
  D= \frac{ \mu d}{ 1+d\bar d}
\end{equation}
which has the desired properties described in \eqref{eq7}: the first limit is clear, and for the second, we only need to take into account that, if ${\rm tr}(Q)\to 0$, then $d\to 1 $ and $\bar d\to 0$, hence
\begin{align*}
\lim_{{\rm tr}(Q)\to 0}D(\mu,{\rm tr}(Q))=\mu.
\end{align*}

\end{proof}

\begin{remark}
Now we would like to give a sufficient condition ensuring {\bf(S3)} when considering the fractional Brownian motion with Hurst parameter $H\in (1/2,1)$. To be more precise, remember that
\begin{align*}
\frac{1}{d}&=\mu \EE \bigg( \frac{c_{\alpha,\beta,\beta^\prime} c_{DG}\sup_{r\in [-\mu,0]} \ltn\theta_r  \omega\rtn_{\beta^{\prime\prime},0,\mu}+c_{DF}\mu^{1-\beta^{\prime\prime}}}{c_{DF}\mu}\bigg)^{\frac{1}{\beta^{\prime \prime}}} \\&\leq \mu \EE  \bigg( \frac{c_{\alpha,\beta,\beta^\prime} c_{DG}\ltn  \omega\rtn_{\beta^{\prime\prime},-\mu,\mu}+c_{DF}\mu^{1-\beta^{\prime\prime}}}{c_{DF}\mu}\bigg)^{\frac{1}{\beta^{\prime \prime}}}=\EE \left(\frac{c_{\alpha,\beta,\beta^\prime} c_{DG}\ltn \omega \rtn_{\beta^{\prime \prime},-\mu,\mu}}{c_{DF}\mu^{1-\beta^{\prime \prime}}} +1\right)^{\frac{1}{\beta^{\prime \prime}}},
\end{align*}
and since \[
(x+1)^{\frac{1}{\beta^{\prime \prime}}} \leq 1 + \frac{1}{\beta^{\prime \prime}} x + \frac{1}{2\beta^{\prime \prime}} x^2, \qquad x\geq 0,
\] we have
\begin{align*}
\frac{1}{d} &\leq 1+ \frac{1}{\beta^{\prime \prime}}\frac{c_{\alpha,\beta,\beta^\prime} c_{DG}}{c_{DF}} \mu^{\beta^{\prime \prime}-1} \EE \ltn \omega \rtn_{\beta^{\prime \prime},-\mu,\mu} + \frac{1}{2\beta^{\prime \prime}} \bigg(\frac{c_{\alpha,\beta,\beta^\prime} c_{DG}}{c_{DF}}\bigg)^2\mu^{2(\beta^{\prime \prime}-1)}  \EE \ltn \omega \rtn^2_{\beta^{\prime \prime},-\mu,\mu}.
\end{align*}
Moreover, since for any $q \geq 1$

\[
\EE \ltn \omega \rtn^q_{\beta^{\prime \prime},-\mu,\mu} \leq C_{H,\beta^{\prime \prime},q} [{\rm tr }(Q)]^{q/2} \mu^{(H-\beta^{\prime \prime})q},
\]
then \begin{eqnarray}\label{d-est}
\frac{1}{d} \leq 1+ p_1 \mu^{H-1} + p_2 \mu^{2(H-1)}
\end{eqnarray}
with
$$ p_1:=\frac{1}{\beta^{\prime \prime}}  \frac{c_{\alpha,\beta,\beta^\prime} c_{DG}}{c_{DF}}[{\rm tr }(Q)]^{1/2} C_{H,\beta^{\prime \prime},1}, \qquad p_2 := \frac{1}{2\beta^{\prime \prime}} \bigg(\frac{c_{\alpha,\beta,\beta^\prime} c_{DG}}{c_{DF}}\bigg)^2 [{\rm tr }(Q)] C_{H,\beta^{\prime \prime},2}.$$

Therefore, if we consider a noisy input such that ${\rm tr} (Q) \to 0$, then $p_1,\,p_2 \to 0$, and thus $d \to 1$. On the other hand, see Lemma \ref{le1}, $\bar d\to 0$. As a result, if we perturb a deterministic evolution equation that it is  exponentially stable with a {\it small} fractional Brownian motion, then it turns out that the resulting stochastic evolution equation driven by this fractional Brownian motion keeps  exponentially stable, see Section \ref{s4} below.

\bigskip

In general, note that in view of (\ref{d-est}), since
$$\frac{1}{D}=\frac{1}{\mu}\bigg(\frac{1}{d}+\bar d\bigg),$$
a sufficient condition for {\bf (S3)} is that for all fixed parameters we choose $\mu \in (0,\min \{1, 1/{c_S c_{DF}} \})$ such that
$$  \lambda - e^{\lambda \mu} \left(1+ \frac{p_1}{\mu^{1-H}}+\frac{p_2}{\mu^{2(1-H)}}+\bar d \right)\frac{c_S c_{DF}}{1-c_S c_{DF}\mu} >0. $$
To simplify the problem, we will consider instead the stronger condition
\begin{align}
\lambda - e^{\lambda \mu} \left(1+ \frac{p}{\mu^{q}}\right)\frac{c_S c_{DF}}{1-c_S c_{DF}\mu} >0 \label{simp_cond}
\end{align}
with $p=p_1+p_2$ and $q=2(1-H)$.

Now define
$$ K: (0, 1/{c_S c_{DF}}) \rightarrow (0, \infty), \qquad K(\mu)=e^{\lambda \mu} \left(1+\frac{p}{\mu^{q}} \right)\frac{c_S c_{DF}}{1-c_S c_{DF}\mu} .$$
We have $$K(0^+) = \infty, \qquad K( (1/{c_S c_{DF}})^-) = \infty,$$ and since
\[
\frac{d K}{d\mu} =K(\mu)\Big (\lambda + \frac{c_S c_{DF}}{1-c_S c_{DF}\mu} - \frac{pq}{p\mu + \mu^{q+1}}\Big),
\] the function $K$ is a convex in $(0,1/{c_S c_{DF}} )$ with unique minimum $\mu(\lambda)$ given by
the equation
\[
\lambda =  \frac{1}{\mu(\lambda)} \left( \frac{pq}{p+(\mu(\lambda))^{q}}  -  \frac{c_S c_{DF} \mu(\lambda)}{1-c_S c_{DF} \mu(\lambda)} \right).
\]
Hence, a sufficient condition ensuring {\bf(S3)} is that
$$\lambda-K(\mu(\lambda))>0.$$
\end{remark}


\section{Comparison with the deterministic equation}\label{s4}

In this last section we would like to compare the exponential stability criteria for deterministic equations with the one that we have required (condition {\bf(S3)}) in Theorem \ref{es}.

In the deterministic setting, that corresponds to the choice $G\equiv 0$, for any fixed $\mu >0$ the sequence of stopping times can be constructed simply as $T_0 = 0$ and
\[
T_{n+1} - T_n = \mu,\quad  n \geq 0,
\]
namely
\[
T_{n} = {\mu n},\quad  n \geq 0,
\]
hence {\bf(S2)} is fulfilled:
$$\liminf_{n\to \infty}\frac{T_n}{n}=\liminf_{n\to \infty}\frac{\mu n}{n} =\mu\geq D.$$

Assume that
\begin{equation}\label{det}
\lambda >c_Sc_{DF}
\end{equation}
Then we may find a $0<\mu<1/c_Sc_{DF}$ such that
\begin{equation*}
\lambda- \frac{c_S c_{DF}}{(1-c_S c_{DF} \mu)}e^{{\lambda \mu}} >0.
\end{equation*}
Furthermore, since for any fixed $\mu$ we know that
\begin{equation*}
  \lim_{{\rm tr}(Q)\to 0}D(\mu,{\rm tr}(Q))=\mu,
\end{equation*}
see (\ref{eq7}), then for a small noise or, in other words, for sufficient small ${\rm tr}(Q)>0$ (or $c_{DG}$ small), we have
\begin{equation*}
\lambda- \frac{c_S c_{DF} \mu}{(1-c_S c_{DF} \mu)D}e^{{\lambda \mu}} >0,
\end{equation*}
which is exactly the condition (\ref{rho}) that causes exponential stability.

In our situation, due to the fact that we are considering the $C^\beta_\beta([t,t+\mu];V)$-norm of mild solutions  (i.e.
exponential convergence of $\|u\|_{\beta,\beta,t,t+\mu}$ for small $\mu$), the exponential stability criteria when $G\equiv 0$ is exactly (\ref{det}). As a conclusion, when we perturb a deterministic system
$$du(t)=(Au(t)+F(u(t)))dt, \quad u(0)=u_0$$
with a small fractional Brownian motion, in the sense that its covariance operator $Q$ is such that ${\rm tr}(Q)$ is sufficiently small, then the perturbed stochastic system remains exponentially stable.


\end{document}